\documentclass[11pt,a4paper,reqno]{amsart}

\usepackage[utf8]{inputenc}
\usepackage[T1]{fontenc}
\usepackage{microtype}
\usepackage{tikz-cd}
\usepackage{booktabs}
\usepackage{enumitem}

\usepackage[margin=1.4in]{geometry}

\usepackage{mathtools}
\usepackage{amsthm}
\usepackage{amssymb}
\usepackage{mathrsfs}

\usepackage[sorting=nyt,maxnames=4]{biblatex}
\usepackage{hyperref}
\hypersetup{
    colorlinks,
    linkcolor={red!50!black},
    citecolor={blue!70!black},
    urlcolor={blue!70!black}
}
\usepackage[capitalise]{cleveref}

\newtheorem{theorem}[equation]{Theorem}
\newtheorem*{theorem*}{Theorem}
\newtheorem{conjecture}[equation]{Conjecture}
\newtheorem{proposition}[equation]{Proposition}
\newtheorem{lemma}[equation]{Lemma}
\newtheorem*{corollary*}{Corollary}
\newtheorem{corollary}[equation]{Corollary}

\newtheorem{introtheorem}{Theorem}

\newtheorem{introcorollary}[introtheorem]{Corollary}

\theoremstyle{definition}
\newtheorem{definition}[equation]{Definition}
\newtheorem{notation}[equation]{Notation}
\newtheorem{defprop}[equation]{Definition / Proposition}

\theoremstyle{remark}
\newtheorem{remark}[equation]{Remark}
\newtheorem{example}[equation]{Example}

\numberwithin{equation}{section}
\numberwithin{figure}{section}
\crefname{section}{\S\kern -3pt}{\S\S}
\crefname{equation}{}{}

\let\oldtocsection=\tocsection
\let\oldtocsubsection=\tocsubsection

\renewcommand{\tocsection}[3]{\hspace{0em}\oldtocsection{#1}{#2}{#3}}
\renewcommand{\tocsubsection}[3]{\hspace{2.5em}{\oldtocsubsection{#1}{#2}{#3}}}

\DeclareMathOperator{\coker}{coker}
\DeclareMathOperator{\Spec}{Spec}
\DeclareMathOperator{\Sym}{Sym}
\DeclareMathOperator{\Tot}{Tot}
\DeclareMathOperator{\Hom}{Hom}

\DeclareMathOperator{\shHom}{\mathcal{H}\kern -1pt \textit{om}}
\DeclareMathOperator{\RshHom}{\dR\mathcal{H}\kern -1pt \textit{om}}
\DeclareMathOperator{\shExt}{\mathcal{E}\kern -1pt \textit{xt}}

\DeclareMathOperator{\Crit}{Crit}
\DeclareMathOperator{\RHom}{\dR Hom}
\DeclareMathOperator{\RG}{\dR\Gamma}
\DeclareMathOperator{\Perf}{\mathfrak{Perf}}
\DeclareMathOperator{\Coh}{Coh}
\DeclareMathOperator{\Bl}{Bl}

\DeclareMathOperator{\QCoh}{QCoh}
\DeclareMathOperator{\MF}{MF}
\DeclareMathOperator{\EF}{EF}

\DeclareMathOperator{\Sing}{Sing}

\newcommand{\id}{\mathrm{id}}

\newcommand{\pt}{\mathrm{pt}}
\newcommand{\dR}{\mathrm{R}} 

\newcommand{\qc}{\mathrm{qc}}
\newcommand{\norm}{\mathrm{norm}}

\newcommand{\eps}{\epsilon}
\newcommand{\tilsigma}{{\tilde\sigma}}
\newcommand{\tilrho}{{\tilde\rho}}

\newcommand{\tilX}{{\widetilde X}}
\newcommand{\tilL}{{\widetilde L}}
\newcommand{\Z}{\mathbb{Z}}
\newcommand{\C}{\mathbb{C}}
\newcommand{\A}{\mathbb{A}}
\newcommand{\calA}{\mathcal{A}}

\newcommand{\calE}{\mathcal{E}}
\newcommand{\scrC}{\mathscr{C}}
\newcommand{\calF}{\mathcal{F}}

\renewcommand{\P}{\mathbb{P}}

\renewcommand{\O}{\mathcal{O}}
\newcommand{\W}{\mathcal{W}}

\title{Odd Kn\"orrer periodicity as a double cover}
\author{Calum Crossley}
\date{}

\begin{document}

\begin{abstract}
    We prove that the derived category of a branched double cover is equivalent
    to a category of matrix factorizations for a fiberwise quadratic potential
    on the associated line bundle. This requires the linear fiber coordinate to
    have odd cohomological degree, so we work with matrix factorizations which
    no longer have the traditional splitting into even and odd parts.
\end{abstract}

\maketitle
\tableofcontents

\section{Introduction}

Kn\"orrer periodicity was originally motivated by a desire to study how
stabilizations
\begin{equation*}
    w\rightsquigarrow w+x_1^2+\cdots+x_r^2
\end{equation*}
affect the category of matrix factorizations \cite{Kn}. There are two
equivalences
\begin{equation*}
    \MF(\A^2,x^2+y^2) \simeq D^b(\pt) \qquad \text{and} \qquad
    \MF([\A^1/\Z_2],x^2) \simeq D^b(\pt),
\end{equation*}
which show by a categorical Kunneth formula that even rank stabilizations
leave the category unchanged, while odd rank stabilizations (which are
non-trivial \cref{eqn:cliff}) can be undone by working equivariantly with
respect to the involution $x\mapsto-x$.

Given such pointwise equivalences, it is natural to vary the input data in a
family to obtain global statements. For example, there is a family of even rank
quadratic forms given by the evaluation pairing $q:V\times V^\vee\to\O_B$ for a
vector bundle $V\to B$, and one global version of even Kn\"orrer periodicity is
as follows:
\begin{equation*}
    \MF(V\times V^\vee, q) \simeq D^b(B).
\end{equation*}
Replacing $B\subset V$ by a more general regular embedding leads to a famous
equivalence exhibiting the derived category of a complete intersection as a
category of matrix factorizations over the total space of a vector bundle
(\cite{O1}, \cite{I}, \cite{Sh}, \cite{H}, etc.).

The purpose of this paper is to give a global statement modelled on a pointwise
equivalence for rank one quadratic forms, which involves two points:
\begin{equation} \label{eqn:odd}
    \MF_{|x|=1}(\A^1,x^2) \simeq D^b(\text{2 pts}).
\end{equation}
Varying this in a family (with potential degenerations) leads to an equivalence
involving a branched double cover. Many of our motivating examples are stacky;
for example, the function $x^2$ on $[\A^1/\Z_2]$ can be seen as a family of
rank one quadratic forms over $B\Z_2$, and the equivalence
$\MF([\A^1/\Z_2],x^2)\simeq D^b(\pt)$ follows from viewing a point as a double
cover of $B\Z_2$.

What makes our discussion novel is that the equivalence \cref{eqn:odd} requires
working with odd grading data. Without grading data, there is a $\Z_2$-graded
equivalence
\begin{equation} \label{eqn:cliff}
    \MF(\A^1,x^2) \simeq D^b_{|\eps|=1}(\C[\eps]/(\eps^2-1)),
\end{equation}
where $\C[\eps]/(\eps^2-1)$ is the Clifford algebra associated to the rank one
quadratic form, which is $\Z_2$-graded with $|\eps|=1$.\footnote{We take this
as a cohomological grading, so the derived category consists of dg-modules
rather than chain complexes of graded modules.} This Clifford algebra does not
split at the graded level, because the factors $\eps\pm1$ are inhomogeneous.
For \cref{eqn:odd} we need to put a non-trivial $\Z_2$-grading on $\C[x]$ where
$|x|=1$, which gives a similar equivalence but with $|\eps|=0$, allowing for a
graded splitting:
\begin{equation*}
    \MF_{|x|=1}(\A^1,x^2)
        \simeq D^b(\C[\eps]/(\eps^2-1))
        \simeq D^b\biggl(\frac{\C[\eps]}{\eps-1}
            \oplus \frac{\C[\eps]}{\eps+1}\biggr)
        \simeq D^b(\text{2 pts}).
\end{equation*}
This non-trivial $\Z_2$-grading lifts to a $\Z$-grading of the curved algebra
$(\C[x],x^2)$, where the trivial one did not (curvature must have degree 2,
forcing $|x|=1$).

A preliminary part of the paper is dedicated to explaining the correct
framework for matrix factorizations with odd grading data, because most
existing approaches to graded matrix factorizations in the literature are not
adequate for our purposes. One reason for this is that many definitions are
based on the splitting of matrix factorizations into pairs of sheaves
$\calE^0\rightleftarrows\calE^1$ interchanged by the differential, which is a
phenomenon unique to evenly-graded rings. Our basic example already involves
objects which cannot be described in this way: the two orthogonal exceptional
objects in $\MF_{|x|=1}(\A^1,x^2)$ arising from \cref{eqn:odd} are the rank one
curved modules
\begin{equation*}
    \begin{tikzcd}
        \C[x] \ar[loop right,distance=1em,"\pm x"]
    \end{tikzcd}
\end{equation*}
equipped with differentials $\pm x\cdot\id_{\C[x]}$, which are indecomposable
as $\C[x]$-modules. Another reason is that odd gradings break
super-commutativity of the structure sheaf, meaning for example that $\RshHom$
is no longer defined in general (see \cref{sec:odd}). This is why even grading
data ($-1\in\C^*_R$ acting trivially) was taken as an axiom in \cite{Seg}.

Our approach is simply to take the object-level definition from \cite{Seg},
which can be applied without the evenness assumption, and follow the by-now
standard method for defining a derived category of matrix factorizations using
acyclic objects, which does not make use of $\RshHom$ \cite{O3}, \cite{P}. We
observe that a version of $\RshHom$ defined over an associated $\Z_2$-quotient
stack still exists when the grading is odd (\cref{sec:loc}), allowing for
local-to-global computations in this more general context.

As some philosophical justification for working with these
non-super-commutative spaces, in \cref{sec:mirr} we note that oddly-graded
matrix factorizations arise naturally in some 1-dimensional examples of mirror
symmetry, corresponding to wrapped Fukaya categories of orbifold surfaces
\cite[\S9]{Sei}, \cite{BSW}.

\subsection{Statements} \label{sec:stmt}
Our main result, putting \cref{eqn:odd} into a family, is the following:

\begin{introtheorem}[\cref{thm:main}] \label[introtheorem]{thm:intro}
    Let $\tilX=\{y^2=f\}\subset\Tot\{L\xrightarrow{\pi}X\}$ be a branched
    double cover of a variety $X$, where $f\in H^0(X,L^2)$ cuts out the branch
    locus and $y\in H^0(\Tot L,\pi^*L)$ is the tautological linear fiber
    coordinate. There is an equivalence
    \begin{equation*}
        \MF(\Tot\{L^{-1}\xrightarrow{\tau}X\}, fq^2) \simeq D^b(\tilX),
    \end{equation*}
    where $L^{-1}$ is graded by a $\C^*$-action acting fiberwise with weight 1,
    so that the tautological linear fiber coordinate $q\in H^0(\Tot
    L^{-1},\tau^*L^{-1})$ is odd.
\end{introtheorem}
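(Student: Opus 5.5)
The plan is to build an explicit kernel object on $\Tot L^{-1}$ and check that it induces the equivalence, reducing the verification to the pointwise equivalence \cref{eqn:odd} by a local-to-global argument. Since $\pi:\tilX\to X$ is finite and flat of degree two, $\pi_*\O_\tilX=\O_X\oplus L^{-1}$ is a sheaf of $\O_X$-algebras. Its multiplication is determined by $y^2=f$, viewed as the map $L^{-2}\to\O_X$ given by $f$. This gives $D^b(\tilX)\simeq D^b(X,\pi_*\O_\tilX)$, the derived category of coherent modules over this algebra. On the other side, we should recognise $\MF(\Tot L^{-1},fq^2)$ as a curved Koszul dual of the same algebra. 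Working relatively over $X$, the curved algebra $(\Sym^\bullet(L)=\tau_*\O_{\Tot L^{-1}},\,fq^2)$ with $q$ odd is the family version of $(\C[x],x^2)$ with $|x|=1$. The Clifford algebra of the rank one quadratic form $f$ on $L^{-1}$ is exactly $\O_X\oplus L^{-1}$, now concentrated in even degree because $q$ is odd. This is the family version of the fact that $\eps$ has degree $0$ when $|x|=1$.

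The functor is $\Phi:D^b(\tilX)\to\MF(\Tot L^{-1},fq^2)$, given on a $\pi_*\O_\tilX$-module $\calF$ by
\[
\Phi(\calF)=\bigl(\tau^*\calF,\ d=q\otimes y\bigr).
\]
Here $y$ acts as $\calF\to\calF\otimes L$, and $q$ is the coordinate on $\Tot L^{-1}$. Then $d^2=q^2y^2=fq^2$, and $d$ has degree $1$ because $q$ is odd. Over a point with $L$ trivial and $f=1$, the two eigen-summands $y=\pm1$ give precisely the rank one curved modules $(\C[x],\pm x)$ from the introduction. Equivalently, $\Phi$ is a Fourier--Mukai transform with kernel $\O_{\tilX\times_X\Tot L^{-1}}$ carrying the differential $qy$. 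I would first check that this kernel is a legitimate object, using the acyclic-object framework of \cite{O3}, \cite{P} as adapted earlier in the paper, and that $\Phi$ descends to the derived categories. The candidate inverse sends a factorization $\calE$ to $\tau_*\calE$ restricted along the zero section, i.e.\ $\calE\otimes^{\mathbb L}_{\Sym L}\O_X$. The residual action of the odd $q$-dual operator makes this a module over the even Clifford algebra, hence a $\pi_*\O_\tilX$-module.

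Full faithfulness will be proved with the $\Z_2$-equivariant sheaf $\RshHom$ constructed in \cref{sec:loc}. The sheaf $\RshHom(\Phi\calF,\Phi\calG)$ pushed to $X$ should be compared with $\pi_*\RshHom_\tilX(\calF,\calG)$. Both are local on $X$, so we may assume $L$ is trivial and $X$ is affine. The comparison then becomes a Koszul computation: the matrix factorization $\Sym L\otimes\O_\tilX$ with differential $qy$ resolves $\O_\tilX$ as a module over the Koszul dual. Essential surjectivity, or generation, follows from showing that the image generates. A factorization whose pullback to each fiber is killed by the pointwise equivalence \cref{eqn:odd} must vanish. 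For this we use that $\MF$ is supported on the critical locus, which is the zero section over $\{f\ne0\}$ and all of $\tau^{-1}\{f=0\}$. We may also need coherence and boundedness to match $D^b$ with the category of compact objects.

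The main obstacle is the branch locus $\{f=0\}$. There the critical locus contains whole fibers of $\tau$, and the pointwise \cref{eqn:odd} degenerates to $\MF_{|x|=1}(\A^1,0)$. Over $\{f\neq0\}$ the argument is étale-local Kn\"orrer periodicity, but over the branch locus we must use genuine Koszul duality between $\Sym L$ with $q$ odd and the exterior-like algebra $\O_X\oplus L^{-1}$. This duality is where odd grading matters: it forces the Koszul dual to live in even degree, so its $D^b$ is an honest coherent category rather than a singularity category. I would handle this step by a direct computation of the endomorphisms of the kernel restricted to the zero section. I would then prove generation using Orlov's support argument, together with the fact that $\tilX\to X$ is finite, so that $\pi^*$ of a generator of $D^b(X)$ generates.
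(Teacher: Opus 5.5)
Your kernel is the paper's: the curved $(B,A)$-bimodule $M=(B\otimes_XA,\,R_y\otimes L_q)$, with $A=\Sym^\bullet_X(L[-1])$ and $B=\O_X\oplus L^{-1}$. Your full-faithfulness step, made precise, is the paper's unit computation, and it is indeed local on $X$. There $\Hom_A(M,M)$ is the complex $B\otimes_XB^{\vee_X}\to B\otimes_XB^{\vee_X}\otimes_XL[-1]\to\cdots$ with differentials $R_y\otimes1-1\otimes R_y^\vee$, which is exact after the first term and has kernel $B$.

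The gap is essential surjectivity. $M$ is a Koszul resolution of a line bundle on the zero section $i:X\to\Tot L^{-1}$. Hence $\Phi(\pi^*G)$ is a twist of $i_*G$, and what your generation step actually needs is that $i_*D^b(X)$ generates $\MF(\Tot L^{-1},fq^2)$. You want this from three ingredients: ``$\MF$ is supported on the critical locus'', Orlov's support argument, and the claim that a factorization killed on every fiber vanishes. None of these is established for oddly-graded, non-super-commutative factorizations. The paper builds $\MF$ from acyclic objects and proves no support or generation theorem for it. The fiberwise claim in particular is an unproved Nakayama-type statement about curved modules.

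You have also misidentified the critical locus. Since $d(fq^2)=q^2\,df+2fq\,dq$, we get $\Crit(fq^2)=\{q=0\}\cup\tau^{-1}(\Sing\{f=0\})$, which is just the zero section when $\tilX$ is smooth. The fibers over the branch locus lie in $\{fq^2=0\}$ but are not critical. The degeneration to $\MF_{|q|=1}(\A^1,0)$ appears only after restricting to a single fiber, which discards the $q^2\,df$ term. So your ``main obstacle'' is in the wrong place, and no argument is actually supplied for the step that remains.

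The missing idea is that no generation argument is needed: you also check the counit. Take $N=\Hom_A(M,A)=(A\otimes_XB^{\vee_X},\,-R_q\otimes R_y^\vee)$, so that $-\otimes_AN$ is right adjoint to $\Phi=-\otimes_BM$. Writing $B^{\vee_X}=\O_X\oplus L$, the bimodule $N\otimes_BM$ becomes the two-term factorization $A\otimes_XA\rightleftarrows A\otimes_XA\otimes_XL$ with maps $f(1\otimes L_q-R_q\otimes1)$ and $1\otimes L_q-R_q\otimes1$. This is a Koszul-type factorization of $f(1\otimes q^2-q^2\otimes1)$. By \cref{ex:koszul} it is quasi-isomorphic to $\coker(1\otimes L_q-R_q\otimes1)$, which multiplication identifies with the diagonal bimodule $A$. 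This gives $\Phi\circ(-\otimes_AN)\simeq\id$ on $\MF_\qc$ directly: a resolution of the diagonal replaces any support theorem. It also shows that $-\otimes_AN$ preserves compact objects, which your sketch would otherwise have to handle separately.
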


Fiberwise over $X$, we have $\A^1$ equipped with a superpotential which is a
multiple of $q^2$. This is equivalent to $\MF_{|q|=1}(\A^1,q^2)\simeq
D^b(\text{2 pts})$ except when $f$ vanishes, where we instead get
$D^b_{|q|=1}(\C[q])\simeq D^b(\C[y]/y^2)$ by Koszul duality. These are the
expected fibers for a branched double cover.

\begin{introcorollary}[\cref{cor:root}] \label[introcorollary]{cor:intro}
    In the same setup, we also have an equivalence
    \begin{equation*}
        \MF([\Tot L^{-1}/\Z_2],fq^2) \simeq D^b(\sqrt{f/X}),
    \end{equation*}
    where $\sqrt{f/X}=[\tilX/\Z_2]$ is the associated root stack and $\Z_2$
    acts fiberwise on $\Tot L^{-1}$.
\end{introcorollary}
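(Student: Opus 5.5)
The plan is to deduce the corollary from \cref{thm:intro} by making the equivalence there $\Z_2$-equivariant and then taking $\Z_2$-invariants on both sides. The $\Z_2$ in question acts on $\Tot L^{-1}$ by $q\mapsto -q$. This fixes $fq^2$, so $\MF([\Tot L^{-1}/\Z_2],fq^2)$ is the category of $\Z_2$-equivariant objects of $\MF(\Tot L^{-1},fq^2)$. On the other side, $\Z_2$ acts on $\tilX$ by the deck involution $y\mapsto -y$, and $D^b(\sqrt{f/X})=D^b([\tilX/\Z_2])$ is the category of $\Z_2$-equivariant objects of $D^b(\tilX)$. Both equivariantizations are taken in the dg-enhanced sense, so they are well behaved: $\Z_2$ is linearly reductive and we are in characteristic zero.

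The key steps, in order, are as follows.

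\begin{enumerate}
\item Recall how the equivalence of \cref{thm:intro} is built. I expect it to be given by a kernel, namely the relative version of the rank one curved modules $(\C[q],\pm q)$ glued over $X$. Concretely, one takes $\tau^*\pi_*\O_{\tilX}=\O\oplus L^{-1}$ on $\Tot L^{-1}$ with the curved differential given by multiplication by $yq$. Its square is $y^2q^2=fq^2$, as required.
\item Check that this kernel is $\Z_2$-equivariant for the diagonal action $(q,y)\mapsto(-q,-y)$. Since $yq$ is invariant under this action, a natural linearization exists.
\item Conclude that the functor intertwines the two $\Z_2$-actions on the categories. An equivalence which is compatible with group actions then induces an equivalence on equivariant categories, which gives the corollary.
\end{enumerate}

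An alternative route, if the kernel description is not available, is to use the $\Z_2$-quotient-stack version of $\RshHom$ from \cref{sec:loc}. The statement is local over $X$, so one would check it \'etale locally where $L$ is trivial and glue using that local Hom computation.

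The main obstacle is step (3): verifying that the equivalence is genuinely equivariant, with coherent data and not just objectwise. In particular one has to track the odd grading. The $\Z_2$-action $q\mapsto -q$ coincides with the action of $-1\in\C^*_R$, and that action is nontrivial here precisely because the grading is odd. So equivariance for this $\Z_2$ interacts with the grading data, and it must be checked that the sign conventions produce the deck involution on $\tilX$ rather than a trivial action.

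If this is delicate, I would instead test the claim on generators. Take $X=\pt$ with $f\neq0$. Then the two objects $(\C[q],\pm q)$ are swapped by $q\mapsto -q$, which matches the swap of the two points of $\tilX$. Taking invariants should give $\MF([\A^1/\Z_2],q^2)\simeq D^b(\pt)$, consistent with the introduction. I would then globalize by the local-to-global argument of \cref{sec:loc}.
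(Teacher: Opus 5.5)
Your proposal is correct and follows essentially the paper's route. The paper's \cref{prop:inv} is exactly your steps 2--3: it shows the kernel $(\O,yq)$ on the roof $\tilL^{-1}$ is fixed by the combined lift $\tilrho\tilsigma$ (since $\tilsigma^*$ turns $yq$ into $-yq$ and $\tilrho^*$ turns it back), which gives $\Phi\circ\sigma^*\cong\rho^*\circ\Phi$, and then passes to $\Z_2$-equivariant categories via \cite{E}; the paper does not spell out the coherence data you flag, but no further idea is needed.
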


This corollary stays within the realm of evenly-graded matrix factorizations,
because $-1\in\C^*$ acts trivially on the quotient stack. It gives a non-linear
version of the rank one case of homological projective duality relating a
linear system of odd rank quadrics to the root stack of the discriminant
divisor, e.g. \cite[\S4.2]{ASS}.

\begin{remark}
    A natural attempt at generalization would be to consider a degree $n$
    cyclic cover $\{y^n=f\}$ associated to $f\in H^0(X,L^n)$, and the analogous
    category $\MF(\Tot L^{-1},fq^n)$. But there is no equivalence here: the
    analogue of the Clifford algebra for $n\ge3$ is $\C[\eps]/\eps^2$ with an
    $n$-ary $A_\infty$ operation \cite{D}, and this has little relation to
    $D^b(\text{$n$ pts})=D^b(\C^{\oplus n})$. The category also cannot be
    $\Z$-graded, since that would require $|q|=\frac{2}{n}$. The orbifold
    $\MF([\Tot L^{-1}/\Z_n],fq^n)$ does have such a grading, but its fiberwise
    behaviour is governed by the category $\MF([\A^1/\Z_n],q^n)$ which is
    equivalent to the $A_{n-1}$ quiver rather than a single point, so
    \cref{cor:intro} also fails to generalize. This orbifold probably relates
    to the results of \cite{KP} instead.
\end{remark}

\subsection{Odd to even}
As is natural in such contexts, the full statement of \cref{thm:intro} allows
for a non-zero superpotential $w$ on $X$ as well as appropriate grading data,
so that the result can be iterated. That is, we have a more general equivalence
\begin{equation} \label{eqn:withw}
     \MF(\Tot L^{-1},w+fq^2) \simeq \MF(\tilX,w).
\end{equation}
If we apply this twice, taking $L$ to be trivial and $f$ to be the constant
function 1,\footnote{This corresponds to a trivial double cover.} then on one
hand we get an even rank stabilization of the superpotential, and on the other
hand we get an iterated double cover. This seems paradoxical at first, since
Kn\"orrer periodicity implies that even rank stabilizations should be trivial,
while an iterated double cover is non-trivial. The explanation for this is
that we need to assume $X$ is evenly-graded for the naive generalization to
hold (see \cref{sec:tech}), and this is typically false when iterating the
result because $q$ is odd. The correct statement when $X$ is oddly-graded
involves certain non-commutative algebras, which become the Clifford algebras
for even rank quadratic forms when iterating an even number of times (see
\cref{ex:kp}). These are Morita trivial despite doubling in size, which
resolves the paradox.

The double cover $\tilX$ also has an equivalence coming from global even
Kn\"orrer periodicity \cite{H} applied to its construction as a hypersurface in
a line bundle. Our result is a square root of this equivalence, using the fact
that $L^{-1}$ is a double cover of $L^{-2}$ ramified over the zero section:
\begin{equation} \label{eqn:sqrt}
    \begin{tikzcd}
        X &[4em]
        (L^{-2},fp) &[4em] \\
        \tilX \ar[u,"2:1","y^2=f"']
            \ar[rr,bend right,distance=4em,"\text{even Kn\"orrer}","\simeq"']
            \ar[r,"\text{\cref{eqn:withw}, $w=0$}","\simeq"'] &
        (L^{-1},fq^2) \ar[u,"2:1","q^2=p"']
            \ar[r,"\text{\cref{eqn:withw}, $w=fp$}","\simeq"'] &
        (L\oplus L^{-2},(y^2+f)p).
    \end{tikzcd}
\end{equation}
Hence one should not expect to gain insights about $\tilX$ which were not
already available through the pre-existing yoga of Kn\"orrer periodicity, up to
minor simplifications. Our result has more value in the other direction, for
understanding matrix factorizations of superpotentials containing terms of the
form $fq^2$ (see \cref{sec:res} for example).

\subsection{Applications}
We give various examples and applications in \cref{sec:ex}. For instance,
homological projective duality for a diagonal family of quadrics can be
interpreted using matrix factorizations for a superpotential of the form $\sum
f_iq_i^2$ representing the varying diagonal quadric equation. \cref{thm:intro}
suggests a geometric interpretation of this as an iterated double cover, which
we investigate in \cref{sec:diag}. Similar models relate to a recent mirror
symmetry proposal for singular double covers \cite[\S3.2]{LLRS}, and our
results may have implications for homological mirror symmetry in this context.

We also deduce an Orlov-type equivalence, relating the Kuznetsov component of a
Fano branched double cover of projective space to matrix factorizations for its
homogeneous discriminant on an affine orbifold (\cref{sec:orlov}). This is in
direct analogy with the classical result for Fano hypersurfaces in projective
space \cite{O2}, which can be understood as following from even Kn\"orrer
periodicity together with the theory of variation of GIT quotients \cite{Seg}.
We replace even Kn\"orrer periodicity by \cref{thm:intro}, applying the same
GIT construction.

As another example of the interaction of \cref{thm:intro} with variation of
GIT, we show in \cref{sec:exoflop} how it leads to a variant on the exoflop
construction of \cite{A} for a singular double cover. Exoflops relate
resolutions of singularities to partial compactifications of LG-models, which
in general give rise to categorical resolutions of singularities \cite{FK}. In
\cref{sec:res} we further apply \cref{thm:intro} to certain computations
involving categorical resolutions constructed in this way. We compute kernel
subcategories, obtaining results which match work using other constructions,
and exhibit an SOD relating two different partial compactifications. These
outcomes are consistent with optimistic conjectures on minimal categorical
resolutions.

\subsection*{Outline}
We discuss oddly-graded matrix factorizations in \cref{sec:odd}, prove the main
results in \cref{sec:thm}, and collect examples and applications in
\cref{sec:ex}. Everything is over $\C$.

\subsection*{Acknowledgements}
I must thank my advisor Ed Segal for keeping me sane in this world of
commutative, graded, non-commutative-graded rings, and much more.

This work was supported by the Engineering and Physical Sciences Research
Council [EP/S021590/1], the EPSRC Centre for Doctoral Training in Geometry and
Number Theory (the London School of Geometry and Number Theory), University
College London.

\section{Odd matrix factorizations} \label{sec:odd}

For a quasi-projective variety $X$ and a global function $w\in H^0(X,\O_X)$,
classical matrix factorizations are $\Z_2$-graded vector bundles
$\calE=\calE^0\oplus\calE^1$ equipped with $\Z_2$-graded morphisms
$d:\calE^\bullet\to(\calE[1])^\bullet\coloneqq\calE^{1-\bullet}$ such that
$d^2=w\cdot\id_{\calE}$. As for chain complexes, we can speak of chain
morphisms and chain homotopies for these objects, and when $X$ is affine the
homotopy category is the appropriate derived category.

When $X$ is non-affine, we must take sheaf cohomology into account \cite{LiP}.
This can be done by observing that the morphism sheaf $\shHom_X(\calE,\calF)$
for two matrix factorizations $\calE$ and $\calF$ again forms a matrix
factorization, but now for the difference of superpotentials $w-w=0$. It is a
$\Z_2$-graded chain complex of sheaves, and we can take sheaf cohomology:
\begin{equation} \label{eqn:locglob}
    \RHom(\calE,\calF) = \RG(X,\shHom_X(\calE,\calF)).
\end{equation}
This defines a suitable derived category.

Now suppose we have a non-trivial $\Z_2$-grading on $\O_X$, i.e. a
$\Z_2$-action on $X$. Matrix factorizations are then $\Z_2$-equivariant vector
bundles $\calE$, equipped with $\Z_2$-equivariant morphisms
\begin{equation*}
    d:\calE \to \calE[1] \coloneqq \calE\otimes\chi
\end{equation*}
satisfying $d^2=w\cdot\id_\calE$, where $\chi$ is the non-trivial character of
$\Z_2$. Locally these are $\Z_2$-graded projective modules over the
$\Z_2$-graded coordinate ring, and we recover the classical definition when the
$\Z_2$-action is trivial.

Once again, there are chain morphisms and chain homotopies for these objects,
from which we obtain a suitable homotopy category when $X$ is affine. However,
the above approach to defining $\RHom$ in the non-affine situation breaks down:
the differential on $\shHom_X(\calE,\calF)$ is no longer $\O_X$-linear in
general, so we cannot take derived global sections over $X$. This is because
when $\O_X$ includes odd elements it is typically \emph{not} super-commutative.
The differential on $\shHom_X(\calE,\calF)$ involves Koszul signs, which
interfere with $\O_X$-linearity.

\begin{example} \label[example]{ex:aff}
    Take $(X,w)=(\A^1,x^2)$ graded so that $x$ is odd, i.e. $\Z_2$ acts by
    $x\mapsto-x$. There is a matrix factorization given by the structure sheaf
    $\O_X$ equipped with the differential $d=x\cdot\id_{\O_X}$, and the induced
    differential on $\shHom_X(\O_X,\O_X)=\O_X$ is
    \begin{equation} \label{eqn:diff}
        x^n \mapsto d\circ x^n - (-1)^nx^n\circ d
            = \begin{dcases*}
                0 & $n$ even, \\
                2x^{n+1} & $n$ odd.
            \end{dcases*}
    \end{equation}
    This is not $\C[x]$-linear (although it is $\C[x^2]$-linear).
\end{example}

While this obstructs the computation of $\RHom$ using $\shHom_X$ on vector
bundles, we can still define a suitable derived category as the homotopy
category of matrix factorizations using injective sheaves instead of vector
bundles. This and other approaches are related through a universal construction
of the derived category of matrix factorizations as a Verdier quotient of the
homotopy category (with more general underlying sheaves) by a subcategory of
acyclic objects. Here acylic means homotopy equivalent to the totalization of
an exact sequence, which is a reformulation of the usual definition that can be
used even though $d^2\ne0$ \cite{P}, \cite{O3}.

This gives a suitable derived category of matrix factorizations even when $X$
is oddly-graded and non-affine, but we seemingly need injective resolutions to
compute morphisms. One of our main observations in this section is that
although morphisms can fail to form sheaves over $X$ when the grading is odd,
they do still form sheaves over $[X/\Z_2]$, allowing for local-to-global
computations like \cref{eqn:locglob} again.

\subsection{Definitions} \label{sec:def}
Here we fix our conventions for grading data, and record how standard theory
can be applied. An overview for the different types of input data and resulting
categories is given in \cref{fig:grading}.

\begin{figure}[ht]
    \centering
    \begin{tikzcd}
        \boxed{\substack{\text{$\Z_2$-graded} \\ \text{LG-model}}} &[1em]
        \boxed{\substack{\text{$\Z$-graded} \\ \text{LG-model}}}
            \ar[l,"\text{forget}"'] \\
        \boxed{\substack{\text{classical matrix} \\ \text{factorizations}}}
            \ar[u,phantom,sloped,"\subset"]
            \ar[u,phantom,"\substack{\text{$\Z_2^R$ trivial}}"
                {xshift=2.3em}] &
        \boxed{\substack{\text{evenly $\Z$-graded} \\ \text{LG-model}}}
        \ar[l,"\text{forget}"']
            \ar[u,phantom,sloped,"\subset"]
            \ar[u,phantom,"\substack{\text{$\{\pm1\}\subset\C^*_R$ trivial}}"'
                {xshift=3.7em}]
            \ar[d,shift left,"\substack{\cref{sec:equi}}"] &[2em]
        \boxed{\substack{\text{classical} \\ \text{derived category}}}
            \ar[l,phantom,"\substack{\text{$\C^*_R$ trivial,} \\ w=0}"
                {yshift=-1.5em},"\supset"'] \\ &
        \boxed{\substack{\text{equivariant} \\ \text{factorizations}}}
            \ar[u,shift left]
    \end{tikzcd}
    \caption{Variations on grading data for Landau--Ginzburg models.}
    \label{fig:grading}
\end{figure}

\begin{definition}
    A \emph{$\Z_2$-graded Landau--Ginzburg model} consists of:
    \begin{enumerate}
        \item A smooth algebraic stack $X$.
        \item A $\Z_2$-action on $X$, called \emph{$R$-charge}.
        \item A $\Z_2$-invariant global function $w\in\Gamma(X,\O_X)$, called
            the \emph{superpotential}.
    \end{enumerate}
    It is \emph{evenly-graded} if $\Z_2$ acts trivially.
\end{definition}

\begin{definition}
    A \emph{$\Z$-graded Landau--Ginzburg model} consists of:
    \begin{enumerate}
        \item A smooth algebraic stack $X$.
        \item A $\C^*$-action on $X$, called \emph{$R$-charge}.
        \item A global function $w\in\Gamma(X,\O_X)$ of $\C^*$-weight 2, called
            the \emph{superpotential}.
    \end{enumerate}
    It is \emph{evenly-graded} if $-1\in\C^*$ acts trivially.
\end{definition}

Where disambiguation is desirable, we will write $\Z^R_2$ / $\C^*_R$ for the
groups acting as $R$-charge. There is a forgetful functor from $\Z$-graded to
$\Z_2$-graded LG-models which identifies $\Z^R_2$ with $\{\pm1\}\subset\C^*_R$.
We typically leave $R$-charge implicit, speaking of ``the LG-model $(X,w)$'' by
abuse of notation.

From now on we only consider $\Z$-graded LG-models, but everything can be
formulated identically at the $\Z_2$-graded level.

\begin{definition}
    A \emph{matrix factorization} on a $\Z$-graded LG-model $(X,w)$ consists of
    a $\C^*_R$-equivariant quasi-coherent sheaf $\calE$ equipped with a
    $\C^*_R$-equivariant morphism
    \begin{equation*}
        d : \calE \to \calE[1] \coloneqq \calE\otimes\chi
    \end{equation*}
    satisfying $d^2=w\cdot\id_\calE$, where $\chi$ is the generating character
    of $\C^*_R$.

    Given another matrix factorization $\calF$ on $(X,w)$, the morphism sheaf
    $\shHom_X(\calE,\calF)$ is again $\C^*_R$-equivariant, and so its global
    sections $\Hom(\calE,\calF)$ are $\Z$-graded. They inherit a square-zero
    differential
    \begin{equation} \label{eqn:homdiff}
        f \mapsto d_\calF\circ f - (-1)^{|f|}f\circ d_\calE,
    \end{equation}
    so matrix factorizations form a dg-category. It is pretriangulated, with
    shift functor\footnote{The effect on morphisms is $f\mapsto f\otimes\chi$
    with no sign.}
    \begin{equation} \label{eqn:shift}
        (\calE,d)[1] \coloneqq (\calE[1],-d).
    \end{equation}
    The mapping cones are the totalizations \cref{eqn:tot} of two-term
    complexes.
\end{definition}

\begin{remark} \label[remark]{rmk:wzero}
    Any algebraic stack $X$ forms a trivial $\Z$-graded LG-model where $\C^*_R$
    acts trivially and $w=0$. An equivariant sheaf for the trivial
    $\C^*_R$-action is simply a $\Z$-graded sheaf, so matrix factorizations for
    this LG-model are $\Z$-graded sheaves with degree 1 endomorphisms $d$
    satisfying $d^2=0$. This is the usual dg-category of chain complexes of
    sheaves on $X$.
\end{remark}

\begin{remark}
    If $(X,w)$ is evenly-graded, then the differential \cref{eqn:homdiff} on
    $\Hom(\calE,\calF)$ is induced by an $\O_X$-linear endomorphism of
    $\shHom_X(\calE,\calF)$, making the latter a matrix factorization for the
    LG-model $(X,0)$ with the same $R$-charge as $(X,w)$.
\end{remark}

\begin{notation}
    Typically one has a splitting of equivariant sheaves
    $\calE=\oplus_i\calE_i$, so then $d=\oplus_{i,j}d_{ij}$ where
    $d_{ij}:\calE_i\to\calE_j[1]$. Our notation for such an object is to
    display the sheaves $\calE_i$ with $d_{ij}$ written as an arrow from
    $\calE_i$ to $\calE_j$ when non-zero. Note the lack of shift; as displayed,
    the maps appear to be off-by-one in terms of $\C^*$-weight.
\end{notation}

\begin{example} \label[example]{ex:triv}
    There is always a trivial matrix factorization
    \begin{equation*}
        \left(\begin{tikzcd}
            \O[-1] \ar[r,shift left,"w"] & \O \ar[l,shift left,"1"]
        \end{tikzcd}\right),
    \end{equation*}
    which denotes the equivariant sheaf $\calE=\O[-1]\oplus\O$ with
    differential
    \begin{equation*}
        d = \begin{pmatrix}
            0 & 1 \\ w & 0
        \end{pmatrix} : \calE \to \calE[1].
    \end{equation*}
    This is contractible in the dg-category (homotopy-equivalent to zero) via
    the following contracting homotopy $h$ with $dh+hd=1$:
    \begin{equation*}
        \left(\begin{tikzcd}
            \O[-1] \ar[r,shift left,"1"] & \O \ar[l,shift left,"0"]
        \end{tikzcd}\right).
    \end{equation*}
    Note that $h$ has degree $-1$; we have displayed
    $h_{ij}:\calE_i\to\calE_j[-1]$ analogously to $d_{ij}$, with the missing
    shift now going in the opposite direction.
\end{example}

\begin{remark} \label[remark]{rmk:left}
    Locally the $\C^*_R$-action makes $\O_X$ a graded algebra, which we view as
    a formal curved dg-algebra with curvature $w$. This is possibly
    non-super-commutative if the grading is odd. Matrix factorizations
    correspond to (curved) \emph{right} dg-modules.

    The Koszul sign $f(rm)=(-1)^{|f||r|}rf(m)$ for left-linearity means that
    left dg-modules are not directly equivalent to $\C^*$-equivariant sheaves,
    and moreover that the shift functor on left dg-modules must modify scalar
    multiplication in odd degrees. Nevertheless, because $\O_X$ is commutative
    when ignoring gradings, it is isomorphic to its opposite dg-algebra by
    multiplication with $\sqrt{-1}$ in odd degrees. Hence left dg-modules give
    the same category (working over $\C$).
\end{remark}

\begin{definition}[\cite{P}, \cite{O3}, \cite{EP}, \cite{LiP}, \cite{BFK}, etc.]
    The \emph{(unbounded) derived category of quasi-coherent matrix
    factorizations} for the $\Z$-graded LG-model $(X,w)$ is the Drinfeld
    quotient of dg-categories
    \begin{equation*}
        \MF_\qc(X,w) \coloneqq
            \{\text{matrix factorizations}\}
                \; / \; \{\text{acyclic matrix factorizations}\},
    \end{equation*}
    where the \emph{acyclic} matrix factorizations are those that are
    homotopy-equivalent to totalizations of bounded exact sequences of matrix
    factorizations. By \emph{exact sequence}, we mean a sequence of closed
    morphisms\footnote{In practice we will more likely write an exact sequence
    with $f_i:\calE_i\to\calE_{i+1}$ of degree 0, but this can be translated
    into a sequence of degree 1 maps since $[1]$ is an autoequivalence.}
    \begin{equation*}
        f_i : \calE_i \to \calE_{i+1}[1]
    \end{equation*}
    such that the maps on underlying sheaves form an exact sequence in
    $\QCoh(X)$. If we write $d_i$ for the differential on $\calE_i$, then by
    \cref{eqn:shift} we have $d_{i+1}f_i=-f_id_i$, which combined with $d^2=w$
    and $f^2=0$ means that the \emph{totalization}
    \begin{equation} \label{eqn:tot}
        \bigl(\oplus_i\calE_i, \;
            {\textstyle\sum_i(d_i+f_i)}\bigr)
    \end{equation}
    is again a matrix factorization of $w$.

    Weak equivalences in the derived category are called
    \emph{quasi-isomorphisms}.
\end{definition}

\begin{remark}
    For the trivial LG-model $(X,0)$ this recovers the usual (unbounded)
    derived category of quasi-coherent sheaves $\MF_\qc(X,0)=D_\qc(X)$ via
    \cref{rmk:wzero}.
\end{remark}

\begin{remark}
    As usual, we conflate the dg-category $\MF_\qc(X,w)$ with its triangulated
    homotopy category. The latter is equivalent to the homotopy category of
    injective matrix factorizations \cite[Proposition 2.4]{LiP}, and if $X$ is
    affine also to the homotopy category of projective matrix fatorizations
    \cite[Proposition 3.14]{BFK}. If $X$ has enough vector bundles, and is
    evenly-graded, it is equivalent to a category of vector bundles where we
    incorporate sheaf cohomology into the morphisms via \cref{eqn:locglob}, as
    in \cite{Seg}.
\end{remark}

\begin{definition}
    We write $\MF(X,w)\subset\MF_\qc(X,w)$ for the thick closure of the
    subcategory of \emph{coherent matrix factorizations}, i.e. those with
    coherent underlying sheaf. These are the compact objects in $\MF_\qc(X,w)$
    \cite[Proposition 2.17]{LiP}.
\end{definition}

\begin{remark}
    For the trivial LG-model $(X,0)$ we have $\MF(X,0)=D^b(X)$.
\end{remark}

From now on we will mostly work with the bounded category $\MF(X,w)$, typically
using the term ``matrix factorization'' to refer to coherent matrix
factorizations.

\begin{example} \label[example]{ex:koszul}
    Suppose $w=fg$, with $f$ a non-zerodivisor. We then have a short exact
    sequence of matrix factorizations
    \begin{equation*}
        \biggl(\begin{tikzcd}[column sep=small]
            \O[-n] \ar[r,shift left,"w"] & \O[1-n] \ar[l,shift left,"1"]
        \end{tikzcd}\biggr) \xrightarrow{\begin{pmatrix}
            f & 0 \\ 0 & 1
        \end{pmatrix}}
        \biggl(\begin{tikzcd}[column sep=small]
            \O \ar[r,shift left,"g"] & \O[1-n] \ar[l,shift left,"f"]
        \end{tikzcd}\biggr) \xrightarrow{\begin{pmatrix}
            1 & 0
        \end{pmatrix}}
        \O/f,
    \end{equation*}
    where $|f|=n$ is the $R$-charge weight. Note that $\O/f$ is a matrix
    factorization with differential equal to zero, which is valid since
    $w|_{\{f=0\}}=0$. The leftmost term is contractible by \cref{ex:triv}, so
    the totalization is homotopy equivalent to the mapping cone for the
    morphism
    \begin{equation*}
        \biggl(\begin{tikzcd}[column sep=small]
            \O \ar[r,shift left,"g"] & \O[1-n] \ar[l,shift left,"f"]
        \end{tikzcd}\biggr) \xrightarrow{\begin{pmatrix}
            1 & 0
        \end{pmatrix}} \O/f.
    \end{equation*}
    Hence this is a quasi-isomorphism, giving a Koszul resolution of $\O/f$ in
    $\MF(X,w)$. It generalizes: \cite[Proposition 3.20]{BFK},
    \cite[\S3.2.1]{ASS}.
\end{example}

\subsection{Local morphisms} \label{sec:loc}
We now explain the existence of an almost-internal Hom functor
$\RshHom_{[X/\Z_2]}$ replacing $\RshHom_X$ when the grading is odd. This
follows from two basic facts:
\begin{enumerate}
    \item The induced grading on $[X/\Z_2]$ is concentrated in even degrees.
    \item The structure sheaf $\O_X$ is a coherent sheaf of curved algebras on
        $[X/\Z_2]$.
\end{enumerate}
Since $\O_{[X/\Z_2]}$ is then super-commutative, we can apply standard theory
as in \cite{EP} to the sheaf $\O_X$ of curved algebras over it, reconstructing
$\MF(X,w)$ as a category linear over $[X/\Z_2]$, i.e. with a well-defined
$\RshHom_{[X/\Z_2]}$.

By induced grading, we mean the $\C^*_R$-action on $[X/\Z_2]$ which is
descended from the original action on $X$. This makes the quotient map
$\pi:X\to[X/\Z_2]$ equivariant, so $\pi_*\O_X$ is a sheaf of graded algebras
over the evenly-graded structure sheaf $\O_{[X/\Z_2]}$.

\begin{remark}
    Here $\O_{[X/\Z_2]}$ is evenly-graded because $-1\in\C^*_R$ acts
    trivially on $[X/\Z_2]$ by construction. This can be understood
    concretely via the smooth atlas
    \begin{equation} \label{eqn:atlas}
        [X/\Z_2] = [(X\times\C^*)/\C^*_G],
    \end{equation}
    where $\C^*_G$ acts via $R$-charge on $X$ and with weight $-2$ on
    $\C^*$.\footnote{Here $\C^*_G=\C^*$ is named purely to distinguish it from
    the group acting as $R$-charge.} The $\C^*_R$-action on $[X/\Z_2]$ can be
    given by an action on $X\times\C^*$ which is trivial on $X$ and has weight
    2 on $\C^*$, since this differs from $R$-charge on $X$ by the
    $\C^*_G$-action which we quotient by. Hence we have an explicit even-degree
    cohomological grading $|z|=2$ on $\O_{X\times\C^*}=\O_X[z^{\pm1}]$, with
    $\pi_*\O_X$ corresponding to $\O_{X\times\C^*}[\sqrt{z}\,]$ with
    $|\sqrt z|=1$.

    Of course, everything on $[(X\times\C^*)/\C^*_G]$ is also
    $\C^*_G$-equivariant, which corresponds to another grading on
    $\O_{X\times\C^*}$, but this is not the cohomological grading.
    Differentials have degree 0 with respect to it, and it is not used for
    Koszul signs.
\end{remark}

\begin{example}
    Consider again $(X,w)=(\A^1,x^2)$, with $|x|=1$. Here $\pi_*\O_X$ is the
    sheaf of algebras $\O\oplus\O(1)$ on $[\A^1/\Z_2]$, where $\O(1)$ is the
    2-torsion line bundle pulled back from $B\Z_2$. The matrix factorization
    from \cref{ex:aff} corresponds to
    \begin{equation*}
        \left(\begin{tikzcd}
            \O(1) \ar[r,shift left,"x"] & \O \ar[l,shift left,"x"]
        \end{tikzcd}\right)
    \end{equation*}
    viewed as a right-linear curved module over $\pi_*\O_X=\O\oplus\O(1)$. Its
    endomorphisms, as computed in \cref{eqn:diff}, form the complex
    \begin{equation} \label{eqn:homs}
        \left(\begin{tikzcd}
            \O(1) \ar[r,shift left,"2x"] & \O \ar[l,shift left,"0"]
        \end{tikzcd}\right)
            \in D^b_{\C^*_R}([\A^1/\Z_2]) \coloneqq \MF([\A^1/\Z_2],0).
    \end{equation}
    After taking global sections, i.e. pushing forward to the coarse moduli
    space, this recovers the natural interpretation of \cref{eqn:diff} as a
    complex of $\C[x^2]$-modules.
\end{example}

\begin{remark}
    Note that in \cref{eqn:homs} the induced $R$-charge on $[\A^1/\Z_2]$ is
    non-trivial, and so $\MF([\A^1/\Z_2],0)$ is not the same as the usual
    category $D^b([\A^1/\Z_2])$. However, because $R$-charge is already elided
    in the $\MF(X,w)$ notation, we will actually treat $D^b(X)$ and $D_\qc(X)$
    as synonyms for $\MF(X,0)$ and $\MF_\qc(X,0)$, where there is implicitly a
    potentially non-trivial $R$-charge. This should cause no confusion, since
    if $X$ has a non-trivial $R$-charge we will never consider the trivially
    $R$-charged $D^b(X)$.

    In fancy terms, this is the derived category for $X$ viewed as a derived
    algebraic stack, where the structure sheaf is a formal dg-algebra via the
    $R$-charge grading.
\end{remark}

For completeness, and because related ideas will be important later, we outline
what formally goes into the argument for existence of $\RshHom_{[X/\Z_2]}$.

\begin{defprop}
    Let $X$ be a smooth algebraic stack with a $\C^*_R$-action. A sheaf of
    curved algebras over $X$ is an algebra $\calA$ in the monoidal category of
    $\C^*_R$-equivariant sheaves on $X$ with a curvature $w\in\Gamma(X,\calA)$
    of $\C^*_R$-weight 2.

    There is a derived category of quasi-coherent sheaves of curved
    \emph{right}\footnote{\cref{rmk:left}.} $\calA$-modules,
    $\MF_\qc(X,\calA,w)$, and of coherent $\calA$-modules, $\MF(X,\calA,w)$,
    defined as in \cite[\S1]{EP}. When $\O_X$ is evenly-graded (or more
    generally, when it is super-commutative) homomorphisms of $\calA$-modules form
    sheaves on $X$, i.e. we have a functor
    \begin{equation*}
        \RshHom_{\calA/X} : \MF_\qc(X,\calA,w) \otimes \MF_\qc(X,\calA,w)
            \to D_\qc(X)
    \end{equation*}
    such that
    \begin{equation*}
        \RHom_\calA(\calE,\calF) = \RG(X,\RshHom_{\calA/X}(\calE,\calF)).
    \end{equation*}
\end{defprop}

Note that \cite[\S1]{EP} covers the more general setting of sheaves of curved
dg-algebras, but without a grading on $\O_X$. It is straightforward to include
such a grading.

\begin{lemma} \label[lemma]{lem:rel}
    Suppose $(X,w)$ is a $\Z$-graded Landau--Ginzburg model, and $\pi:X\to Y$
    is an affine morphism which is equivariant with respect to a given
    $\C^*_R$-action on $Y$. Then $\pi_*$ induces an equivalence
    \begin{equation*}
        \MF(X,w)\simeq\MF(Y,\pi_*\O_X,w).
    \end{equation*}
    If $Y$ is evenly-graded, there is therefore a relative $\shHom$ functor
    \begin{equation*}
        \RshHom_{X/Y} : \MF_\qc(X,w)\otimes\MF_\qc(X,w) \to D_\qc(Y)
    \end{equation*}
    such that
    \begin{equation*}
        \RHom_X(\calE,\calF) = \RG(Y,\RshHom_{X/Y}(\calE,\calF)).
    \end{equation*}
\end{lemma}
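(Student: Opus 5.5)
The plan is to realise $\pi_*$ as an equivalence already at the level of dg-categories of quasi-coherent curved modules, check that it matches the two classes of acyclic objects, and then read off the relative $\shHom$ from the Definition / Proposition.

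\textbf{Equivalence of abelian data.} Since $\pi$ is affine and $\C^*_R$-equivariant, $\pi_*$ is an equivalence between $\C^*_R$-equivariant quasi-coherent sheaves on $X$ and $\C^*_R$-equivariant quasi-coherent right $\pi_*\O_X$-modules on $Y$. This is the standard relative Spec description; it is compatible with the equivariance because $\pi$ is equivariant. As $\O_X$ is commutative, right and left module structures agree before signs are imposed. Under this correspondence:
\begin{itemize}
\item a morphism $d:\calE\to\calE[1]$ with $d^2=w$ goes to a $\pi_*\O_X$-linear morphism $\pi_*d$ with $(\pi_*d)^2=w$, using $\pi_*(\calE\otimes\chi)=\pi_*\calE\otimes\chi$;
\item curved modules go to curved modules;
\item the Hom complexes are identified, since $\Hom_X(\calE,\calF)=\Hom_{\pi_*\O_X}(\pi_*\calE,\pi_*\calF)$ as graded vector spaces and the differential \cref{eqn:homdiff} is given by the same formula on both sides.
\end{itemize}
So $\pi_*$ is an isomorphism of dg-categories of quasi-coherent objects, compatible with shifts and cones.

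\textbf{Acyclics and coherence.} Because $\pi_*$ is exact on quasi-coherent sheaves (affine morphism) and reflects exactness (it is an equivalence onto modules), exact sequences of matrix factorizations correspond exactly to exact sequences of curved $\pi_*\O_X$-modules. Totalizations \cref{eqn:tot} and homotopy equivalences are also preserved. I will check that the acyclic class in \cite[\S1]{EP} is defined the same way, as homotopy equivalence to totalizations of bounded exact sequences. Then the Drinfeld quotients agree, giving $\MF_\qc(X,w)\simeq\MF_\qc(Y,\pi_*\O_X,w)$.

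For the coherent version, $\calE$ is coherent on $X$ if and only if $\pi_*\calE$ is a coherent $\pi_*\O_X$-module. This is the step needing care: it uses that $\pi$ is affine, so coherence over $\pi_*\O_X$ (finite type as a module over the algebra) matches coherence on $X$, although $\pi_*\calE$ need not be coherent over $\O_Y$. The thick closures therefore correspond, which gives $\MF(X,w)\simeq\MF(Y,\pi_*\O_X,w)$.

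\textbf{Relative Hom.} When $Y$ is evenly-graded, $\O_Y$ is super-commutative, so the Definition / Proposition supplies
\[
\RshHom_{\pi_*\O_X/Y}:\MF_\qc(Y,\pi_*\O_X,w)\otimes\MF_\qc(Y,\pi_*\O_X,w)\to D_\qc(Y),
\]
with $\RG(Y,\RshHom_{\pi_*\O_X/Y}(-,-))$ computing $\RHom_{\pi_*\O_X}$. Define $\RshHom_{X/Y}$ by precomposing with $\pi_*$. The formula
\[
\RHom_X(\calE,\calF)=\RG\bigl(Y,\RshHom_{X/Y}(\calE,\calF)\bigr)
\]
then follows because the equivalence identifies $\RHom_X$ with $\RHom_{\pi_*\O_X}$.

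\textbf{Main obstacle.} The hardest part is confirming that the framework of \cite{EP} carries over with a possibly odd grading on the algebra $\pi_*\O_X$, as long as the base $\O_Y$ is even. The key point is that the differential \cref{eqn:homdiff} on $\shHom_{\pi_*\O_X}$ is $\O_Y$-linear. Koszul signs only involve degrees of elements of $\O_Y$, and these are all even, so the signs are trivial.
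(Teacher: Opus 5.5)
Your proposal is correct and is essentially the paper's argument. The paper's proof is a single sentence: the lemma follows from the classical equivalence, for affine $\pi$, between (equivariant) quasi-coherent sheaves on $X$ and quasi-coherent $\pi_*\O_X$-modules on $Y$. It then obtains $\RshHom_{X/Y}$ from the Definition / Proposition by precomposing with $\pi_*$, exactly as you do, and your compatibility checks (differentials, Hom complexes, exact sequences and totalizations, coherence, and $\O_Y$-linearity of the Hom differential when $Y$ is evenly-graded) are the details the paper leaves implicit when it says that including the grading in \cite[\S1]{EP} is straightforward.
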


This follows from (very classical) equivalences of underived module categories.
Applying it to the finite morphism $\pi:X\to[X/\Z_2]$ gives
\begin{equation*}
    \RshHom_{[X/\Z_2]} : \MF_\qc(X,w)\otimes\MF_\qc(X,w) \to D_\qc([X/\Z_2]),
\end{equation*}
and if $X$ is evenly-graded this is related to the usual $\RshHom_X$ by
\begin{equation*}
    \RshHom_{[X/\Z_2]}(\calE,\calF) = \pi_*\RshHom_X(\calE,\calF).
\end{equation*}

\begin{remark}
    Since $\pi$ is finite, not just affine, we have
    \begin{equation*}
        \RshHom_{[X/\Z_2]} : \MF(X,w)\otimes\MF(X,w) \to D^b([X/\Z_2]),
    \end{equation*}
    provided that $X$ has enough vector bundles. This follows from resolving
    coherent matrix factorizations by finite type vector bundles, as in
    \cite[Lemma 3.6]{Seg}.
\end{remark}

\begin{remark}
    Since $w$ descends to $[X/\Z_2]$, there is also the category
    $\MF([\A^1/\Z_2],w)$. This has an involution $\O(1)\otimes-$ coming from
    the 2-torsion line bundle $\O(1)$ pulled back from $B\Z_2$, and the
    equivariant category for this involution gives an alternative construction
    of $\MF(X,w)$ \cite{E}. It is equivalent to analyzing
    $\MF([X/\Z_2],\pi_*\O_X,w)$ via an explicit description of the algebra
    structure on the rank 2 sheaf $\pi_*\O_X$.
\end{remark}

\subsection{Even models} \label{sec:even}

Because $\pi:X\to[X/\Z_2]$ is a (stacky) double cover, we will be able to apply
\cref{thm:main} to it. This turns out to produce an evenly-graded LG-model
which is equivalent to the oddly-graded LG-model that we started with.

More precisely, we have a Cartesian diagram
\begin{equation*}
    \begin{tikzcd}[row sep=small,column sep=small]
        X \ar[d] \ar[r] & \pt \ar[d] \\
        {[X/\Z_2]} \ar[r,"\rho"] & B\Z_2.
    \end{tikzcd}
\end{equation*}
The double cover $\pt\to B\Z_2$ corresponds to the 2-torsion line bundle
$\O(1)$ on $B\Z_2$, and hence $X\to[X/\Z_2]$ corresponds to $\rho^*\O(1)$,
which we also write simply as $\O(1)$. Applying \cref{thm:main} to this
2-torsion line bundle (see \cref{rmk:triv}) gives
\begin{equation} \label{eqn:evenpre}
    \MF(X,w) \simeq \MF(\Tot\{\O(1)\to[X/\Z_2]\},w+q^2).
\end{equation}
More explicitly, since $\O(1)$ is pulled back from $B\Z_2$ we can write
\begin{equation*}
    \Tot\{\O(1) \to [X/\Z_2]\}
        = [X/\Z_2] \times_{B\Z_2} \Tot\{\O(1)\to B\Z_2\}
        = [(X\times\A^1)/\Z_2],
\end{equation*}
where $\Z_2$ acts by $R$-charge on $X$ and by $q\mapsto-q$ on $\A^1$. Then
\cref{eqn:evenpre} becomes
\begin{equation} \label{eqn:even}
    \MF(X,w) \simeq \MF([(X\times\A^1)/\Z_2],w+q^2),
\end{equation}
where the right-hand side has the original $R$-charge on $X$, with $|q|=1$.
This precisely matches the $\Z_2$-action, making the new LG-model
evenly-graded.

\begin{example}
    Consider $(\A^2,x^2)$ with $R$-charge $|x|=1$, $|y|=0$. Then
    \cref{eqn:even} gives
    \begin{equation*}
        \MF(\A^2,x^2) \simeq \MF([\A^2_{x,q}/\Z_2]\times\A^1_y,x^2+q^2).
    \end{equation*}
    This is equivalent to $B\Z_2\times\A^1_y$ by even Kn\"orrer periodicity, so
    we get
    \begin{equation*}
        \MF(\A^2,x^2) \simeq D^b(B\Z_2\times\A^1) \simeq D^b(\A^1)^{\oplus2}
    \end{equation*}
    which also follows from \cref{thm:main} for the trivial double cover of
    $\A^1$.
\end{example}

\begin{example}
    Now consider a different $R$-charge on $(\A^2,x^2)$, where $|x|=|y|=1$.
    Here \cref{eqn:even} becomes
    \begin{equation*}
        \MF(\A^2,x^2) \simeq \MF([\A^3/\Z_2],x^2+q^2),
    \end{equation*}
    and Kn\"orrer periodicity gives $[\A^1/\Z_2]$ instead of $B\Z_2\times\A^1$.
    This is not the trivial double cover of $\A^1$; \cref{thm:main} fails to
    apply because $y$ was odd (see \cref{sec:tech}(\ref{itm:tech2})).
\end{example}

What happens here is that the objects $\begin{tikzcd} \O \ar[loop
right,distance=1em,"\pm x"] \end{tikzcd}\in\MF(\A^2,x^2)$ are orthogonal if $y$
is even, but not if $y$ is odd. This is because odd degree morphisms of matrix
factorizations are closed if they anti-commute with the differential, by
\cref{eqn:homdiff}, so multiplication by $y$ is a closed map between the two
objects iff $y$ is odd. These objects correspond to the two branches of the
trivial double cover $\A^1\amalg\A^1$ when $y$ is even and the two orbifold
line bundles on $[\A^1/\Z_2]$ when $y$ is odd.

\subsection{Loop factorizations}

One main novelty with oddly-graded LG-models is the possibility of rank 1
factorizations of the form $\begin{tikzcd}\O_Z\ar[loop right,distance=1em,"{\pm
f}"]\end{tikzcd}$, where say $f\in\Gamma(\O_Z)$ is a non-zerodivisor and
$w|_Z=f^2$. It can help to understand these objects as summands in the
splitting of a Koszul resolution (\cref{ex:koszul}), diagonalizing its
differential:
\begin{equation} \label{eqn:split}
    \O_Z/f = \left(\begin{tikzcd}
            \O_Z \ar[r,shift left,"f"] & \O_Z \ar[l,shift left,"f"]
    \end{tikzcd}\right) =
    \left(\begin{tikzcd}
        \O_Z \ar[loop right,distance=1em,"f"]
    \end{tikzcd}\right) \oplus
    \left(\begin{tikzcd}
        \O_Z \ar[loop right,distance=1em,"-f"]
    \end{tikzcd}\right).
\end{equation}

For example, on $(\A^1,x^2)$ we can compute without Koszul signs that
\begin{equation*}
    \RshHom(\O/x,\,\O/x) = \RshHom\bigl(\hspace*{-0.4em}\begin{tikzcd}
        \O \ar[r,shift left,"x"] & \O \ar[l,shift left,"x"]
    \end{tikzcd}\hspace*{-0.4em}, \, \O/x\bigr) = \O/x \oplus \O/x,
\end{equation*}
from which we deduce using \cref{eqn:split} that
\begin{align*}
    \RshHom\bigl(\hspace*{-0.4em}\begin{tikzcd}
        \O \ar[loop right,distance=1em,"\pm x"]
    \end{tikzcd},\hspace*{-0.2em}\begin{tikzcd}
        \O \ar[loop right,distance=1em,"\pm x"]
    \end{tikzcd}\hspace*{-0.1em}\bigr) &= \O/x, \\
    \RshHom\bigl(\hspace*{-0.4em}\begin{tikzcd}
        \O \ar[loop right,distance=1em,"\pm x"]
    \end{tikzcd},\hspace*{-0.2em}\begin{tikzcd}
        \O \ar[loop right,distance=1em,"\mp x"]
    \end{tikzcd}\hspace*{-0.1em}\bigr) &= 0.
\end{align*}
These also follow from direct computations with Koszul signs like
\cref{eqn:diff}. The right-hand sides of these equations should be viewed as
sheaves on $[\A^1/\Z_2]$; \cref{sec:loc}.

\subsection{Equivariant factorizations} \label{sec:equi}

Here we explain the relationship between matrix factorizations in our sense and
equivariant factorizations as formulated in \cite{BFK}. In short, the latter
framework is equivalent to working with evenly-graded matrix factorizations,
and cannot be directly applied in the oddly-graded case.

Following \cite[\S3]{BFK}, suppose that $G$ is an affine algebraic group acting
on a smooth variety $X$, and $w\in\Gamma(X,L)$ is a $G$-invariant section of a
$G$-equivariant line bundle $L$. There is then a dg-category of equivariant
factorizations, which we will refer to as $\EF(X,G,w)$, whose objects have the
form
\begin{equation} \label{eqn:equi}
    \begin{tikzcd}[column sep=small]
        \calE_{-1} \ar[r,"f"] & \calE_0 \ar[r,"g"] & \calE_{-1}\otimes L
    \end{tikzcd}
\end{equation}
where everything is $G$-equivariant and $gf=w$. We will be sloppy about the
distinction between (bounded and unbounded) derived and underived categories in
what follows, but it is not hard to see that all variants match up on both
sides.

To relate $(X,G,w)$ to an LG-model in our sense, we pass to the $\C^*$-bundle
\begin{equation*}
    U \coloneqq \Tot\{(L^{-1})^\times \xrightarrow{\pi} [X/G]\},
\end{equation*}
which is the complement of the zero-section in $\Tot\{L^{-1}\to[X/G]\}$. Let
$\C^*_R$ act fiberwise on $U$ with weight 2. There is a tautological section
$p\in\Gamma(U,\pi^*(L^{-1})^\times)$, and the equivariant factorization
\cref{eqn:equi} gives rise to an object
\begin{equation*}
    \left(\begin{tikzcd}
        (\pi^*\calE_{-1})[1] \ar[r,shift left,"\pi^*f"] &[1em]
        \pi^*\calE_0 \ar[l,shift left,"(\pi^*g)p"]
    \end{tikzcd}\right) \in \MF(U,(\pi^*w)p).
\end{equation*}
Note that $\pi^*$ canonically produces $\C^*_R$-equivariant sheaves, because
$\pi$ is equivariant with respect to the trivial $\C^*_R$-action on $[X/G]$.
Also, note that $U$ is evenly-graded. This gives a functor
$\EF(X,G,w)\to\MF(U,(\pi^*w)p)$.

Conversely, suppose $(X,w)$ is an evenly-graded LG-model. Because it is
evenly-graded, the $\C^*_R$-action factors through the quotient
$\C^*_G\coloneqq\C^*_R/\{\pm1\}$. Since $\Z^R_2$ acts trivially, any matrix
factorization $(\calE,d)$ splits into $(\pm1)$-eigensheaves
$\calE=\calE_+\oplus\calE_-$ interchanged by $d$. Viewing these as
$\C^*_G$-equivariant sheaves, we get
\begin{equation*}
    \calE_- \xrightarrow{d} \calE_+[1]
        \xrightarrow{d} \calE_-[2] = \calE_-\otimes\chi,
\end{equation*}
where $\chi$ is the generating character of $\C^*_G$. Note that
$w\in\Gamma(X,\O(\chi))^{\C^*_G}$. This gives a functor
$\MF(X,w)\to\EF(X,\C^*_G,w)$, which is easily seen to be an equivalence.

Applying this to the evenly-graded LG-model $(U,(\pi^*w)p)$ from above, we find
\begin{equation*}
    \MF(U,(\pi^*w)p) \simeq \EF(U,\C^*_G,(\pi^*w)p)
\end{equation*}
where $\C^*_G$ acts on $U$ fiberwise with weight 1. By \cite[Lemma 3.48]{BFK}
we then have
\begin{equation*}
    \MF(U,(\pi^*w)p) \simeq \EF(U,\C^*_G,(\pi^*w)p) \simeq \EF(X,G,w),
\end{equation*}
which is realized by the functor $\EF(X,G,w)\to\MF(U,(\pi^*w)p)$ constructed
above.

This shows that equivariant factorizations can be written as matrix
factorizations on evenly-graded LG-models, and vice versa. Hence the framework
of equivariant factorizations is essentially equivalent to the framework of
evenly-graded LG-models. Oddly-graded matrix factorizations cannot be written
as equivariant factorizations, since they do not split into pairs
$(\calE_0,\calE_{-1})$ in general.

\begin{example}
    Consider $(X,G,w)=(\A^1,\C^*_G,x^2)$ where $\C^*_G$ acts with weight 1.
    Then
    \begin{equation*}
        (U,(\pi^*w)p) = \bigl([(\A^1\times\C^*)/\C^*_G], \, x^2p\bigr)
    \end{equation*}
    where $\C^*_G$ acts with weight $-2$ on $\C^*$, and $R$-charge is $|x|=0$,
    $|p|=2$. As in \cref{eqn:atlas}, this is equivalent to $([\A^1/\Z_2],x^2)$
    with $|x|=1$. Hence
    \begin{equation*}
        \EF(\A^1,\C^*_G,x^2) \simeq \MF([\A^1/\Z_2],x^2) \simeq D^b(\pt).
    \end{equation*}
    More generally, for any $\Z$-graded LG-model $(X,w)$ the equivariant
    factorization category $\EF(X,\C^*_R,w)$ is equivalent to
    $\MF([X/\Z^R_2],w)$. When $X$ is evenly-graded we can replace $\C^*_R$ by
    the quotient $\C^*_R/\{\pm1\}$, which instead gives $\MF(X,w)$.
\end{example}

\section{Main theorem} \label{sec:thm}

Firstly, here is a rough outline. Notation as in \cref{sec:stmt}, we have a
diagram
\begin{equation}
    \begin{tikzcd}
        (L^{-1},fq^2) \ar[d,"\pi"'] &
        (\tilL^{-1}\coloneqq\tau^*L^{-1},fq^2) \ar[l,"\tau"'] \ar[d,"\pi"'] \\
        X & \tilX \coloneqq \{y^2=f\} \subset L, \ar[l,"\tau"']
    \end{tikzcd}
\end{equation}
and the equivalence $\MF(L^{-1},fq^2)\simeq D^b(\tilX)$ is given by the
Fourier--Mukai kernel
\begin{equation} \label{eqn:fm}
    \left(\begin{tikzcd}
        \O_{\tilL^{-1}} \ar[loop right,distance=2em,"yq"]
    \end{tikzcd}\right) \in \MF(\tilL^{-1},fq^2).
\end{equation}
When $X=\pt$ and $\tilX=\{y^2=1\}$ this recovers the two generators
\begin{equation*}
    \left(\begin{tikzcd}
        \C[q] \ar[loop right,distance=1em,"\pm q"]
    \end{tikzcd}\right)\in\MF(\A^1,q^2)
\end{equation*}
at the two points $y=\pm1$.

\subsection{Technicalities} \label{sec:tech}

The full statement will replace $X$ by an LG-model $(X,w)$, adding both a
superpotential and a non-trivial $R$-charge, which should be compatible with
$L$ and $f$. The above description then breaks down for two reasons:

\begin{enumerate}[itemsep=5pt]
    \item \label{itm:tech1}
        When $L^{-1}$ and $\tilX$ are both oddly-graded, the Fourier--Mukai
        kernel should be a module over the tensor product of the two
        oddly-graded algebras, which is non-commutative; there are non-trivial
        Koszul signs. This means we have to phrase everything algebraically, as
        the geometric fiber product $\tilL^{-1}$ does not have these Koszul
        signs.

    \item \label{itm:tech2}
        If the base $X$ itself is oddly-graded, it turns out the equivalence is
        false as stated. This is again due to missing Koszul signs, and the
        correct general statement replaces $L^{-1}$ and $\tilX$ by
        non-commutative algebras where odd fiber coordinates anti-commute with
        odd functions on $X$.\footnote{We insert Koszul signs into
        $\O_{L^{-1}[1]}=\Sym^\bullet_X(L[-1])$, forming an algebra in
        $\O_X$-bimodules locally given by $R[q]$ with product
        $rq^asq^b=(-1)^{a|q||s|}rsq^{a+b}$, and similarly for
        $\O_\tilX=\Sym^\bullet_X(L^{-1})/(y^2-f)$.} The necessity of these
        signs is illustrated in \cref{ex:kp}.
\end{enumerate}

In what follows we assume that $X$ is evenly-graded, ignoring the subtleties of
(\ref{itm:tech2}). The alternative would mean dealing with more general
non-commutative LG-models, and we are only really interested in commutative
applications.

\subsection{Involutions} Before getting into the full statement and proof, we
make an observation about the geometry of this Fourier--Mukai transform.

\begin{proposition} \label[proposition]{prop:inv}
    The deck transformation $\sigma:y\mapsto-y$ on $\tilX$ corresponds to the
    involution $\rho:q\mapsto-q$ on $L^{-1}$ under the Fourier--Mukai transform
    from \cref{eqn:fm}.
\end{proposition}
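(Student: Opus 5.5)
The plan is to show that the kernel $\calE\coloneqq\bigl(\O_{\tilL^{-1}}\ \text{with loop } yq\bigr)$ from \cref{eqn:fm} is invariant, up to canonical isomorphism, under the combined involution $\sigma\times\rho$ on $\tilL^{-1}$. The intertwining statement $\Phi\circ\sigma^*\cong\rho^*\circ\Phi$ then follows formally from base change for the Fourier--Mukai transform $\Phi(-)=\pi_*(\tau^*(-)\otimes\calE)$. Here $\pi,\tau$ are the two projections from $\tilL^{-1}$ in the diagram at the start of \cref{sec:thm}.

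First I will check that both involutions lift to automorphisms of the LG-models involved. The map $\rho:q\mapsto-q$ is $\C^*_R$-equivariant, since it is the action of $-1\in\C^*_R$ on the fiber. It preserves $fq^2$, so it acts on $\MF(L^{-1},fq^2)$ by $\rho^*$. The map $\sigma:y\mapsto-y$ preserves $\tilX=\{y^2=f\}$, and so acts on $D^b(\tilX)$. On the fiber product $\tilL^{-1}=\tau^*L^{-1}$, write $\tilsigma$ and $\tilrho$ for the induced involutions acting on $y$ and $q$ respectively. The projections are equivariant: $\pi\tilsigma=\sigma\pi$ and $\tau\tilrho=\rho\tau$, while $\pi\tilrho=\pi$ and $\tau\tilsigma=\tau$.

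The key computation is that $\tilsigma^*\calE$ and $\tilrho^*\calE$ are both the rank one factorization $\O_{\tilL^{-1}}$ with differential $-yq$. Hence $\tilsigma^*\calE\cong\tilrho^*\calE$ canonically, and in fact via the identity of $\O$. This uses that $\tilrho^*$ acts on $\O$ compatibly with the equivariant structure, i.e.\ without an extra sign twist. In the oddly graded setting I must check that $\rho$, being the $R$-charge action of $-1$, induces the identity on the underlying equivariant sheaf $\O$ and only changes the differential. Equivalently, $\tilsigma^*\tilrho^*\calE\cong\calE$.

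Then base change gives the result. By flat base change and the projection formula,
\[
\sigma^*\Phi(\calF)=\sigma^*\pi_*(\tau^*\calF\otimes\calE)\cong\pi_*\tilsigma^*(\tau^*\calF\otimes\calE)\cong\pi_*(\tau^*\calF\otimes\tilsigma^*\calE),
\]
and similarly
\[
\Phi(\rho^*\calF)\cong\pi_*(\tilrho^*\tau^*\calF\otimes\calE)\cong\pi_*\tilrho^*(\tau^*\calF\otimes\tilrho^*\calE)\cong\pi_*(\tau^*\calF\otimes\tilrho^*\calE),
\]
using $\pi\tilrho=\pi$. Since $\tilsigma^*\calE\cong\tilrho^*\calE$, this gives $\sigma^*\Phi\cong\Phi\rho^*$. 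I will run the argument in whichever direction the paper's $\Phi$ goes; the inverse direction is identical.

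The main obstacle is technicality~(\ref{itm:tech1}). The kernel really lives over the non-commutative tensor algebra with Koszul signs, so the ``geometric'' base change above must be replaced by its algebraic analogue. I would phrase everything via \cref{lem:rel}, treating $\tilL^{-1}$ as the sheaf of algebras $\O_\tilX\otimes_{\O_X}\O_{L^{-1}}$ (with signs) over $X$. I would then verify directly that $y\mapsto -y$ and $q\mapsto-q$ are algebra automorphisms commuting with the signed product, which is clear since both are grading-compatible. The sign check on $\calE$ is then unaffected.
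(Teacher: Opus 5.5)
Your proposal is correct and is essentially the paper's proof. The paper also lifts $\sigma,\rho$ to $\tilsigma,\tilrho$ on the roof, uses $\rho\tau=\tau\tilrho$, $\sigma\pi=\pi\tilsigma$, $\tau\tilsigma=\tau$ and $\pi\tilrho=\pi$, and relies on the fact that both $\tilsigma^*$ and $\tilrho^*$ turn the loop $yq$ into $-yq$, so the kernel is invariant under $\tilsigma\tilrho$.

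Two minor remarks, since a list is not allowed here. First, with the paper's labelling the transform is $\calF\mapsto\tau_*(\pi^*\calF\otimes\calE)$ from $D^b(\tilX)$ to $\MF(L^{-1},fq^2)$; your displayed $\pi_*(\tau^*(-)\otimes\calE)$ tensors two factorizations of $fq^2$, so the reverse direction would need the dual kernel, which is equally invariant. Second, in the proposition's setting $X$ carries no $R$-charge, so $\tilX$ is trivially graded and technicality \cref{sec:tech}(\ref{itm:tech1}) does not arise; your final paragraph is therefore unnecessary.
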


\begin{proof}
    Write $\tilsigma$ and $\tilrho$ for the lifts of $\sigma$ and $\rho$ to the
    roof $\tilL^{-1}$. Letting $\calF\in D^b(\tilX)$, we compute
    \begin{align*}
        \rho^*\tau_*(\pi^*\sigma^*\calF\otimes(\begin{tikzcd}
            \O \ar[loop right,distance=1em,"yq"]
        \end{tikzcd}))
            &= \tau_*\tilrho^*\tilsigma^*(\pi^*\calF\otimes(\begin{tikzcd}
                \O \ar[loop right,distance=1em,"-yq"]
            \end{tikzcd})) \\
            &= \tau_*\tilsigma^*\tilrho^*(\pi^*\calF\otimes(\begin{tikzcd}
                \O \ar[loop right,distance=1em,"-yq"]
            \end{tikzcd})) \\
            &= \tau_*(\pi^*\calF\otimes(\begin{tikzcd}
                    \O \ar[loop right,distance=1em,"yq"]
            \end{tikzcd})).
    \end{align*}
    The first isomorphism follows from $\rho\tau=\tau\tilrho$ and
    $\sigma\pi=\pi\tilsigma$, while the second and third use
    $\tilrho\tilsigma=\tilsigma\tilrho$, $\tau\tilsigma=\tau$ and
    $\pi\tilrho=\pi$.
\end{proof}

We can then take $\Z_2$-equivariant categories \cite{E} on both sides of the
equivalence
\begin{equation*}
    \MF(L^{-1},fq^2) \simeq D^b(\tilX),
\end{equation*}
which is equivalent to taking the stacky quotients by $\rho$ and $\sigma$. This
gives the following corollary to \cref{thm:main}, as stated in the
introduction:

\begin{corollary} \label[corollary]{cor:root}
    $\MF([L^{-1}/\Z_2],fq^2)\simeq D^b([\tilX/\Z_2])=D^b(\sqrt{f/X})$.
\end{corollary}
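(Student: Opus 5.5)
The plan is to deduce the corollary from \cref{thm:main} (the equivalence $\MF(L^{-1},fq^2)\simeq D^b(\tilX)$, realized by the Fourier--Mukai kernel \cref{eqn:fm}) together with \cref{prop:inv}. The idea is to pass to $\Z_2$-equivariant categories on both sides, in the sense of \cite{E}.

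First I would upgrade \cref{prop:inv} from an isomorphism of functors $\rho^*\Phi\cong\Phi\sigma^*$ to an isomorphism of $\Z_2$-actions, i.e. compatible with the group structure. This should come from the kernel itself. The object $\bigl(\O_{\tilL^{-1}}\xrightarrow{yq}\bigr)$ carries a canonical $\Z_2$-equivariant structure for the diagonal involution $\tilsigma\tilrho$ on the roof, because $yq$ is invariant under $(y,q)\mapsto(-y,-q)$ and the linearization is simply the identity on $\O$. The computation in the proof of \cref{prop:inv} is then induced by this linearization. The required cocycle condition reduces to $(\tilsigma\tilrho)^2=\id$ and the linearization squaring to $\id$, which is immediate. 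I also have to respect the Koszul-sign caveat in \cref{sec:tech}(\ref{itm:tech1}). In the algebraic description, the involution acts by algebra automorphisms $y\mapsto -y$, $q\mapsto-q$ of the graded tensor product, and these preserve the sign rule, so the linearization still makes sense there.

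With an equivalence of categories with $\Z_2$-action in hand, the induced functor on equivariant categories is an equivalence
\begin{equation*}
\MF(L^{-1},fq^2)^{\Z_2}\simeq D^b(\tilX)^{\Z_2}.
\end{equation*}
To make this honest for dg-enhancements rather than only triangulated categories, I would work throughout with the dg-level Fourier--Mukai functor. Next I identify each side with a category on a quotient stack. For $\tilX$, the standard fact is $D^b(\tilX)^{\Z_2}\simeq D^b([\tilX/\Z_2])$, and $[\tilX/\Z_2]=\sqrt{f/X}$ by the definition of the root stack, since $\tilX\to X$ is the double cover of $L$-data. For $L^{-1}$, I need $\MF(L^{-1},fq^2)^{\Z_2}\simeq\MF([L^{-1}/\Z_2],fq^2)$. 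Here $\rho$ commutes with $\C^*_R$ and preserves $fq^2$, so equivariant matrix factorizations are exactly matrix factorizations on the quotient stack. I would check this at the level of the acyclic quotient: acyclicity is tested on underlying exact sequences, and the forgetful and averaging functors preserve it since $|\Z_2|$ is invertible over $\C$. Compact generation then matches the coherent subcategories. This is where \cite{E} applies, and for stacks one can use the atlas.

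The main obstacle is the first step, namely producing a genuine equivariant structure (including the higher coherence) rather than just the pointwise isomorphism of \cref{prop:inv}. My approach is to linearize the kernel itself, as above. If that runs into trouble because of the non-geometric roof, I would instead redo the whole proof of \cref{thm:main} directly over $B\Z_2$. That is, I would apply the theorem with base $[X/\Z_2]$, where $\Z_2$ acts trivially on $X$ and $L$ is twisted by the sign character, so the double cover becomes $[\tilX/\Z_2]\to[X/\Z_2]$; this uses that \cref{thm:main} allows stacky bases (cf. \cref{rmk:triv}).
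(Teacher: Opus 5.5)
Your proposal is correct and follows the paper's own argument. You combine \cref{thm:main} with \cref{prop:inv}, pass to $\Z_2$-equivariant categories as in \cite{E}, and identify these with $\MF([L^{-1}/\Z_2],fq^2)$ and $D^b([\tilX/\Z_2])=D^b(\sqrt{f/X})$; linearizing the kernel for the diagonal involution $\tilsigma\tilrho$ supplies the coherence data the paper leaves implicit. Your fallback is also valid, in the spirit of \cref{sec:even}: it applies \cref{thm:main} directly over $X\times B\Z_2$ with $L$ twisted by the sign character, and it would bypass \cref{prop:inv} and the equivariance bookkeeping entirely.
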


We have restricted this statement to the setup where $X$ has no superpotential
or $R$-charge, but it almost certainly remains true in the more general context
giving
\begin{equation*}
    \MF([L^{-1}/\Z_2],w+fq^2) \simeq \MF(\sqrt{f/X},w).
\end{equation*}
The analogue of \cref{prop:inv} becomes more subtle, due to the technicality
\cref{sec:tech}(\ref{itm:tech1}), so we have omitted the details of this
generalization.

\subsection{Statement and proof}

Let $(X,w)$ be an LG-model, and take a $\C^*_R$-equivariant line bundle $L$ on
$X$ with a $\C^*_R$-invariant section $f\in\Gamma(X,L^2)$.

The total space of $L$ inherits a $\C^*_R$-action which restricts to
$\tilX=\{y^2=f\}\subset L$ because $y\in\Gamma(L,\tau^*L)$ is
$\C^*_R$-invariant, so we get an LG-model $(\tilX,\tau^*w)$.

Similarly, the total space of the $\C^*_R$-equivariant line bundle
$L^{-1}[1]\coloneqq L^{-1}\otimes\chi$ inherits a $\C^*_R$-action, and
$q\in\Gamma(L^{-1}[1],\pi^*L^{-1}[1])$ is a $\C^*_R$-invariant section, giving
another LG-model $(\Tot(L^{-1}[1]),\pi^*w+fq^2)$. The shift ensures $fq^2$ has
$\C^*_R$-weight 2.

\begin{theorem} \label[theorem]{thm:main}
    If $(X,w)$ is evenly-graded and $\tilX$ is smooth, there is an equivalence
    \begin{equation*}
        \MF(\Tot(L^{-1}[1]), \, \pi^*w+fq^2) \simeq \MF(\tilX,\tau^*w).
    \end{equation*}
\end{theorem}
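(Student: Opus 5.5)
The plan is to reduce everything to a Morita-type statement about two sheaves of curved algebras over the evenly-graded base $X$, and then check that statement locally.

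\textbf{Reduction to algebras over $X$.} Both projections $\pi:\Tot(L^{-1}[1])\to X$ and $\tau:\tilX\to X$ are affine and $\C^*_R$-equivariant, and $X$ is evenly-graded. So \cref{lem:rel} gives
\begin{equation*}
    \MF(\Tot(L^{-1}[1]),\pi^*w+fq^2)\simeq\MF(X,\calA,w+fq^2),
    \qquad
    \MF(\tilX,\tau^*w)\simeq\MF(X,\mathcal{B},w).
\end{equation*}
Here $\calA=\Sym^\bullet_X(L[-1])$ is locally $R[q]$ with $|q|=1$, and $\mathcal{B}=\O_X\oplus L^{-1}$ is locally $R\oplus Ry$ with $y^2=f$ and $|y|=0$. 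Because $X$ is evenly-graded, no extra Koszul signs appear between $R$ and $q$ (this is exactly \cref{sec:tech}(\ref{itm:tech2})). Moreover, relative Hom sheaves $\RshHom_{\calA/X}$ and $\RshHom_{\mathcal{B}/X}$ exist, so all computations can be done locally on $X$ and then glued.

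\textbf{The kernel and the functor.} Take the algebraic version of \cref{eqn:fm}: the $(\mathcal{B},\calA)$-bimodule $\mathcal{P}=\mathcal{B}\otimes_{\O_X}\calA$ with differential given by left multiplication by $yq$. Since $y$ is even it commutes with $q$, so $(yq)^2=y^2q^2=fq^2$. Hence $\mathcal{P}$ is a curved module for the difference of curvatures, $fq^2$ on the $\calA$ side versus $0$ on the $\mathcal{B}$ side, and $w$ cancels. Note that $\mathcal{P}$ is locally free of rank $2$ over $\calA$ and finite over $X$.

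Define
\begin{equation*}
    \Phi:\MF(X,\mathcal{B},w)\to\MF(X,\calA,w+fq^2),\qquad \calE\mapsto\calE\otimes_{\mathcal{B}}\mathcal{P}.
\end{equation*}
Its right adjoint is $\RshHom_{\calA/X}(\mathcal{P},-)$, followed by global sections. The strategy is the standard one: prove that $\Phi$ is fully faithful, and that its image generates.

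\textbf{Full faithfulness.} It suffices to show that the natural map $\mathcal{B}\to\RshHom_{\calA/X}(\mathcal{P},\mathcal{P})$ is a quasi-isomorphism of sheaves on $X$. This is a local computation. On the basis $1,y$, the endomorphism complex is $\mathrm{Mat}_2(R[q])$ with differential $\phi\mapsto[yq,\phi]$, taken with Koszul signs as in \cref{eqn:homdiff}. I expect to see two cases.
\begin{itemize}
    \item Where $f$ is a unit, $\mathcal{P}$ splits as in \cref{eqn:split} into the two loop factorizations $\pm\sqrt f\,q$. The computation \cref{eqn:diff} then shows the endomorphisms are $\O\oplus\O$, which is $\mathcal{B}$.
    \item In general, I filter by $q$-degree. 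The associated graded is a Koszul-type complex whose cohomology is concentrated in $q$-degree $0$ and equals $R\oplus Ry$. Smoothness of $\tilX$, together with the local structure of $f$, should be what makes this filtration argument converge cleanly.
\end{itemize}
This sign-laden computation, uniform across the branch locus $f=0$, is the step I expect to be the main obstacle. I would first test it in the fiber over a point of $\{f=0\}$. There the claim reduces to the Koszul duality $D^b_{|q|=1}(\C[q])\simeq D^b(\C[y]/y^2)$, where $\RHom_{\C[q]}$ of the augmentation-type module should recover $\C[y]/y^2$. Afterwards I would check that the generic and special cases glue, using the $\C[x^2]$-linearity observed in \cref{ex:aff}.

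\textbf{Generation.} I would show that the right adjoint is conservative, that is, $\RshHom_{\calA/X}(\mathcal{P},\calF)=0$ forces $\calF\simeq0$. Again this is local over $X$, and I would argue by supports.
\begin{itemize}
    \item Away from $\{f=0\}$ it follows from the fiberwise equivalence \cref{eqn:odd} in families.
    \item Along $\{f=0\}$ it follows from the Koszul duality statement above, since $\mathcal{P}$ restricts to the Koszul resolution of the zero section in the manner of \cref{ex:koszul}.
\end{itemize}
Finally, I need $\Phi$ to preserve the coherent (compact) subcategories in both directions. One direction holds because $\mathcal{P}$ is finite over $X$. For the other, I would use smoothness of $\tilX$ to know that $D^b(\tilX)$ coincides with perfect complexes, so that compact objects correspond.
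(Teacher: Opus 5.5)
Your set-up agrees with the paper: the reduction via \cref{lem:rel}, the kernel $\mathcal{P}=(\mathcal{B}\otimes_X\calA,yq)$ (the paper's $M$), and the full-faithfulness criterion. Since $\mathcal{P}$ is free of rank two over $\calA$, you have $\RshHom_\calA(\mathcal{P},\mathcal{P})\cong\mathcal{P}\otimes_\calA\mathcal{P}^\vee$ with $\mathcal{P}^\vee=\Hom_\calA(\mathcal{P},\calA)$. The map $\mathcal{B}\to\mathcal{P}\otimes_\calA\mathcal{P}^\vee$ is exactly the unit $B\to M\otimes_AN$ that the paper verifies. That verification is easier than you expect and needs no smoothness, filtration or case split. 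The differential raises $q$-degree by exactly one. On $\mathrm{Mat}_2(R)q^n$ it is $\phi\mapsto Y\phi-(-1)^n\phi Y$ with $Y=\bigl(\begin{smallmatrix}0&f\\1&0\end{smallmatrix}\bigr)$. A direct check gives $\ker[Y,-]=R\oplus RY=\operatorname{im}\{Y,-\}$ and $\ker\{Y,-\}=\operatorname{im}[Y,-]$ for every $f$, so the complex is exact in positive degrees with $H^0\cong\mathcal{B}$.

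The genuine gap is essential surjectivity. Your conservativity argument is only a sketch, and the step along $\{f=0\}$ fails as phrased. Koszul duality $D^b_{|q|=1}(\C[q])\simeq D^b(\C[y]/y^2)$ is a statement about small categories. On such a fiber, once you forget the $\C[y]/y^2$-action, $\RshHom(\mathcal{P},-)$ becomes $\RHom_{\C[q]}(\C,-)$, and this kills the nonzero object $\C[q^{\pm1}]\in\MF_\qc(\A^1,0)$. So vanishing would have to be tested in the coderived-type category $\MF_\qc(X,\mathcal{B},w)$, not in $D_\qc(X)$ or after global sections. Even then, two steps need support theory for oddly-graded curved modules (generation by supports, local cohomology, base change), none of which is available here. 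These are passing from $\{f=0\}$ to the family, and the off-branch-locus claim ``in families''.

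The missing idea is the paper's: check the counit directly. The right adjoint is $-\otimes_\calA\mathcal{P}^\vee$. The bimodule $\mathcal{P}^\vee\otimes_\mathcal{B}\mathcal{P}$ is the two-term factorization over $\calA\otimes_X\calA$ of $f(1\otimes q^2-q^2\otimes1)$, with components $f(1\otimes L_q-R_q\otimes1)$ and $1\otimes L_q-R_q\otimes1$. The latter is injective with cokernel the diagonal bimodule $\calA$. So by \cref{ex:koszul} the counit $\mathcal{P}^\vee\otimes_\mathcal{B}\mathcal{P}\to\calA$ is a quasi-isomorphism of curved bimodules, uniformly across the branch locus. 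With unit and counit both quasi-isomorphisms you get the equivalence on $\MF_\qc$, and compact objects are then preserved automatically.

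Two smaller points:
\begin{enumerate}
\item $\mathcal{P}$ is finite over $\calA$, not over $X$; that is what makes $\Phi$ preserve coherence.
\item You tacitly take $|y|=0$, $|q|=1$ locally. This excludes $L$ of odd $R$-weight, where the parities of $y$ and $q$ swap.
\end{enumerate}
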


\begin{remark}
    The assumption that $\tilX$ is smooth is not essential; we include it
    because we only considered LG-models for smooth stacks in \cref{sec:def}.
    When $\tilX$ is non-smooth the equivalence still holds (with coherent
    matrix factorizations, not perfect matrix factorizations), but there is one
    caveat that if the $\C^*_R$-action on $L^{-1}[1]$ is fiberwise trivial
    then we must restrict to a formal neighbourhood of the zero section in
    $L^{-1}[1]$. When $\tilX$ is smooth the critical locus $\Crit(fq^2)$ is
    supported on the zero section, so this is unnecessary. The same caveat
    applies to the usual even Kn\"orrer periodicity equivalence for a singular
    hypersurface.
\end{remark}

\begin{remark} \label[remark]{rmk:triv}
    Unbranched double covers occur when $f$ trivializes $L^2$, i.e. when $L$ is
    a 2-torsion line bundle. Often the choice of trivialization has no impact,
    and we might suppress it from the notation, writing $f=1$. The root stack
    is then just $\sqrt{\emptyset\,/\,X}=X$, and \cref{cor:root} is a family
    version of the equivalence $\MF([\A^1/\Z_2],x^2)\simeq D^b(\pt)$.
\end{remark}

\begin{proof}[Proof of \cref{thm:main}]
    Using \cref{lem:rel}, we phrase everything in terms of the sheaves of
    curved algebras
    \begin{equation*}
        A = \pi_*\O_{\Tot(L^{-1}[1])} = \Sym^\bullet_X(L[-1])
            \quad \text{and} \quad
        B = \tau_*\O_\tilX = \O_X\oplus L^{-1}.
    \end{equation*}
    The coordinates $q$ and $y$ on $L^{-1}$ and $\tilX$ induce morphisms
    $L_q:A\to L^{-1}\otimes_XA$ and $R_y:B\to B\otimes_XL$ which are right
    $A$-linear and left $B$-linear of degrees 1 and 0. Our proposed equivalence
    between $\MF(X,A,\pi^*w+fq^2)$ and $\MF(X,B,\tau^*w)$ is the functor
    $-\otimes_BM$ given by the curved $(B,A)$-bimodule
    \begin{equation*}
        M = (B\otimes_XA, \; R_y\otimes L_q).
    \end{equation*}
    From $\otimes$-$\Hom$ adjunction this has a right adjoint given by the
    curved $(A,B)$-bimodule
    \begin{equation*}
        N = \Hom_A(M,A) = (A\otimes_XB^{\vee_X}, \; -R_q\otimes R_y^\vee).
    \end{equation*}
    The unit and counit of the adjunction give us bilinear maps $B\to
    M\otimes_AN$ and $N\otimes_BM\to A$ where $A$ and $B$ are the respective
    diagonal bimodules. We will verify that these are quasi-isomorphisms of
    curved bimodules.

    A priori we get functors at the level of $\MF_\qc$, with $-\otimes_BM$
    descending to the level of $\MF$ because $M$ is coherent as a right
    $A$-module. While $N$ is not coherent as a right $B$-module, the fact that
    it is an inverse equivalence for $M$ will imply that it does still preserve
    the subcategory $\MF\subset\MF_\qc$ of compact objects.

    For the unit, we have
    \begin{align*}
        M\otimes_AN &= \bigl(B\otimes_XA\otimes_XB^{\vee_X}, \;
            R_y\otimes L_q\otimes1 - 1\otimes R_q\otimes R_y^\vee\bigr) \\
                &= \left(\begin{tikzcd}
                        [column sep=large,ampersand replacement=\&]
                B\otimes_XB^{\vee_X}
                \ar[r,"R_y\otimes1-1\otimes R_y^\vee"] \&[2em]
                B\otimes_XB^{\vee_X}\otimes_XL[-1] \\
                \qquad\qquad \ar[r,"R_y\otimes1-1\otimes R_y^\vee"] \&
                B\otimes_XB^{\vee_X}\otimes_XL^2[-2] \ar[r] \&[-2em] \cdots
            \end{tikzcd}\right).
    \end{align*}
    Expanding the algebra structure on $B=\O_X\oplus L^{-1}$ one can compute
    directly that this complex is exact after the first term, and that the
    kernel in degree 0 is isomorphic to $B$ via the map $B\to
    B\otimes_XB^{\vee_X}$; $b\mapsto b(1\otimes1^\vee+y\otimes y^\vee)$. This
    is the unit of the adjunction, corresponding to multiplication
    $B\otimes_XB\to B$, which is $B$-bilinear.

    For the counit, we have
    \begin{align*}
        N\otimes_BM
            &= \bigl(A\otimes_XB^{\vee_X}\otimes_XA, \;
            -R_q\otimes R_y^\vee\otimes1+1\otimes L_y^\vee\otimes L_q\bigr) \\
            &= \left(\begin{tikzcd}[column sep=huge,ampersand replacement=\&]
                A\otimes_XA \ar[r,shift left,"f(1\otimes L_q-R_q\otimes1)"]
                    \&[2em]
                A\otimes_XA\otimes_XL
                    \ar[l,shift left,"1\otimes L_q-R_q\otimes1"]
        \end{tikzcd}\right).
    \end{align*}
    This is a Koszul-type factorization (\cref{ex:koszul}) of $f(1\otimes
    q^2-q^2\otimes1)$, and hence is quasi-isomorphic to $\coker(1\otimes
    L_q-R_q\otimes1)$ via projection from $A\otimes_XA$. This cokernel is
    isomorphic to the diagonal bimodule $A$ via the multiplication map
    $A\otimes_XA\to A$. Again, one can check that projection to $A\otimes_XA$
    followed by multiplication is the counit of the adjunction. It is clearly
    $A$-bilinear.
\end{proof}

\begin{example} \label[example]{ex:kp}
    There is an analogous statement for $\Z_2$-graded LG-models, where
    everything is $\Z_2^R$-equivariant instead of $\C^*_R$-equivariant. We can
    use this version to explore classical $\Z_2$-graded Kn\"orrer periodicity
    as an example of \cref{sec:tech}(\ref{itm:tech2}).

    Indeed, start with the trivial $\Z_2$-graded LG-model $(\Spec\C,0)$. The
    trivial line bundle $L$ with $f=1$ gives an equivalence
    \begin{equation*}
        \MF(\C[q],q^2) \simeq D^b(\C[y]/(y^2-1)),
    \end{equation*}
    where the two choices of $\Z_2^R$-equivariant structure on $L$ determine
    whether $q$ is odd and $y$ is even or vice versa. (Here $D^b(-)$ denotes
    the $\Z_2$-graded derived category.)

    For classical even Kn\"orrer periodicity we want to consider
    $\MF(\C[p,q],p^2+q^2)$ where $p$ and $q$ are even. Again, there is an
    equivalence
    \begin{equation*}
        \MF(\C[p,q],p^2+q^2) \simeq \MF(\C[p,y]/(y^2-1),p^2)
    \end{equation*}
    where $y$ is odd, because $q$ was even. We want to view this second
    LG-model as an application of the theorem to $\Spec\C[y]/(y^2-1)$ with the
    trivial line bundle again, giving a double cover of $\Spec\C[y]/(y^2-1)$.
    Because $y$ is odd, by \cref{sec:tech}(\ref{itm:tech2}) we should introduce
    a Koszul sign in this double cover. The result is the following:
    \begin{equation*}
        \MF(\C[p,y]/(y^2-1),p^2)
            \simeq D^b(\C\langle y,z\rangle/(y^2-1,z^2-1,yz+zy)).
    \end{equation*}
    This ``iterated double cover'' is now a non-commutative algebra, which is
    in fact the Clifford algebra for a rank two quadratic form. It is Morita
    trivial, which is a restatement of classical Kn\"orrer periodicity. If we
    had instead used the na\"ive iterated double cover we would get a
    commutative algebra which is very much not Morita trivial, violating the
    expected Kn\"orrer periodicity equivalence.

    The correct way to realize this commutative iterated double cover is to
    make $y$ and $z$ even, by taking $p$ and $q$ to be odd. Then $p$ and $q$
    have to anti-commute:
    \begin{equation*}
        \MF(\C\langle p,q\rangle/(pq+qp),p^2+q^2)
            \simeq D^b(\C[y,z]/(y^2-1,z^2-1))
            \simeq D^b(\pt)^{\oplus4}.
    \end{equation*}
    This non-commutative LG-model with odd coordinates $p$ and $q$ is
    equivalent to the category of Fourier--Mukai kernels for functors
    $\MF(\C[p],p^2)\to\MF(\C[q],q^2)$, where the signs are an instance of
    \cref{sec:tech}(\ref{itm:tech1}). Its equivalence with $D^b(\pt)^{\oplus4}$
    is as expected from $\MF(\C[p],p^2)\simeq D^b(\pt)^{\oplus2}$.
\end{example}

\section{Examples} \label{sec:ex}

\subsection{Mirror symmetry} \label{sec:mirr}
It turns out that oddly-graded LG-models arise naturally in some basic
1-dimensional examples of mirror symmetry. Recall the well-known mirror pair of
toric LG-models $([\A^1/\Z_n],x^n)$ and $(\A^1,x^n)$. In one direction, there
is a $\Z$-graded homological mirror symmetry equivalence
\begin{equation} \label{eqn:hmsa}
    \MF([\A^1/\Z_n],x^n) \simeq D^b(A_{n-1}) \simeq \W(\A^1,x^n),
\end{equation}
where $\W(\A^1,x^n)$ is the partially wrapped Fukaya category of a disc with
$n$ stops, equivalent to the $A_{n-1}$ quiver by \cite[\S6.2]{HKK}, and the
$B$-side LG-model $([\A^1/\Z_n],x^n)$ is evenly $\Z$-graded with
$|x|=\frac{2}{n}$, and was shown to be equivalent to the $A_{n-1}$ quiver
(using slightly different language) in \cite{T}.

\begin{remark}
    The length $n-1$ exceptional collection on $\MF([\A^1/\Z_n],x^n)$ can be
    deduced from variation of GIT: the orbifold $[\A^1/\Z_n]$ is one of the
    phases in the GIT problem $\C^2/\C^*$ where $\C^*$ acts with weights
    $(1,-n)$, and the superpotential extends as $x^ny$. The other phase is
    $(\A^1,y)$, and $\MF(\A^1,y)=0$ since there are no critical points, so the
    semi-orthogonal decomposition coming from variation of GIT \cite{BFK2}
    gives a full exceptional collection on $\MF([\A^1/\Z_n],x^n)$ with length
    $n-1$ coming from the sum of the weights.
\end{remark}

If we swap the $A$-side and $B$-side, there should be an equivalence
\begin{equation*}
    \MF(\A^1,x^n) \simeq \W([\A^1/\Z_n],x^n).
\end{equation*}
This should hold for $\Z_2$-graded categories with no grading data, but we can
also ask about $\Z$-graded versions. On the $B$-side we must have
$|x|=\frac{2}{n}$, so there is an even $\Z$-grading if $n=1$, an odd
$\Z$-grading if $n=2$, and no $\Z$-grading for $n\ge3$. On the $A$-side we have
a disc with one stop\footnote{The superpotential $x^n$ gives $n$ stops on the
covering space, and hence one stop on the $\Z_n$-quotient.} and one $\Z_n$
orbifold point. If $n=1$ this is smooth, $\Z$-gradeable by the obvious line
field on the disc. For $n\ge2$ we have an orbifold (we will use \cite{BSW} as a
reference for partially wrapped Fukaya categories of orbifold surfaces; see
also \cite[\S9]{Sei}). Line fields only exist over orbifold points of order 2,
so again there is a $\Z$-grading for $n=2$ and no $\Z$-grading for $n\ge3$. The
line field has winding number 1 around a $\Z_2$ orbifold point, so it reduces
to a non-trivial $\Z_2$-grading on the Fukaya category, matching the odd
grading on the $B$-side. See \cite[\S2.2]{LeP} for a more detailed discussion
of grading data for Fukaya categories.

When $n=1$ both categories are trivial, but when $n=2$ we have a non-trivial
equivalence between the oddly-graded $B$-side LG-model $(\A^1,x^2)$ and the
partially wrapped Fukaya category of a disc with one orbifold point and one
stop. Both of these categories are generated by two orthogonal exceptional
objects:
\begin{equation*}
    \MF(\A^1,x^2) \simeq D^b(\pt)^{\oplus2} \simeq \W([\A^1/\Z_2],x^2).
\end{equation*}
The first equivalence is the basic example from the present paper, while the
second is in \cite[\S4]{BSW}. More generally, the $n$-stopped
$\W([\A^1/\Z_2],x^{2n})$ from \cite[\S4]{BSW} is mirror to the
oddly-graded\footnote{The $\Z_n$-quotient ensures that the fractional
$\C^*$-action $\lambda\cdot x=\lambda^{1/n}x$ is well-defined, but it does not
ensure that $-1\in\C^*$ acts trivially. For that we would need a
$\Z_{2n}$-quotient, which would bring us back to the type $A$ model.} $B$-side
model $\MF([\A^1/\Z_n],x^{2n})$ where $|x|=\frac{1}{n}$:
\begin{equation*}
    \MF([\A^1/\Z_n],x^{2n}) \simeq D^b(D_{n+1}) \simeq \W([\A^1/\Z_2],x^{2n}).
\end{equation*}
The equivalence between the matrix factorization category and partially wrapped
Fukaya category here can be deduced from the type $A$ equivalence
\cref{eqn:hmsa} by taking $\Z_2$-equivariant categories \cite{E} corresponding
to the stacky quotients $[\A^1/\Z_n]\to[\A^1/\Z_{2n}]$ and
$\A^1\to[\A^1/\Z_2]$. The equivalence with the $D_{n+1}$ quiver for the
$A$-side follows from \cite[Corollary 4.13]{BSW},\footnote{See also
\cite[Remark 4.15]{BSW} regarding quadratic relations.} from which we can
deduce it for the $B$-side also.

\begin{remark}
    The full exceptional collection on $\MF([\A^1/\Z_n],x^{2n})$ coming from
    the equivalence with $D^b(D_{n+1})$ can again be deduced from variation of
    GIT. Indeed, $([\A^1/\Z_n],x^{2n})$ is a phase of the GIT problem
    $\C^2/\C^*$ with weights $(1,-n)$ and superpotential $x^{2n}y^2$, and hence
    has a sequence of $n-1$ exceptional objects with orthogonal complement
    equivalent to $\MF(\A^1,y^2)$. This consists of two further exceptional
    objects which are mutually orthogonal, giving the $D_{n+1}$ quiver.

    Note that here we are applying the standard theory of variation of GIT to
    oddly-graded LG-models. The results almost certainly remain true in this
    setting, with the same proofs, but we will not check the details. See the
    following sections for further examples.
\end{remark}

\subsection{Orlov-type equivalences} \label{sec:orlov}

A hypersurface $\{f=0\}\subset\P^{n-1}$ is related via semi-orthogonal
decomposition to the orbifold LG-model $([\A^n/\Z_d],f)$ by a seminal result of
Orlov \cite{O2}. This result fits into the framework of GLSMs, in that it can
be deduced by analyzing the different phases of the GIT problem $\C^{n+1}/\C^*$
with weights $(1,\ldots,1,-d)$ and superpotential $fp$ \cite{Seg}. The affine
orbifold is one such phase, while the other phase is the line bundle
$(\Tot\O(-d)_{\P^{n-1}},fp)$ which is equivalent to $D^b(\{f=0\})$ by Kn\"orrer
periodicity \cite{H}.

As \cref{thm:main} gives a dimensional reduction of Kn\"orrer periodicity for
double covers \cref{eqn:sqrt}, we also get a dimensional reduction of Orlov's
theorem in the special case of a double cover. Again we will apply standard
results for variation of GIT quotients to oddly-graded LG-models, although this
specific application can also be deduced by an alternative argument
(\cref{rmk:orltwo}).

Take a double cover $X\xrightarrow{2:1}\P^{n-1}$, with branch locus
$\{g=0\}\subset\P^{n-1}$ of degree $2d$. Applying \cref{thm:main} gives
\begin{equation*}
    D^b(X) \simeq \MF(\Tot\O(-d)_{\P^{n-1}},gq^2),
\end{equation*}
where $\O(-d)_{\P^{n-1}}$ is a phase of the GIT problem $\C^{n+1}/\C^*$ with
weights $(1,\ldots,1,-d)$ and $R$-charges $(0,\ldots,0,1)$. The other phase is
the affine orbifold $([\A^n/\Z_d],g)$ with $\Z_d$-weights $(1,\ldots,1)$ and
$R$-charges $(\frac{1}{d},\ldots,\frac{1}{d})$. When $d\le n$ we then have
\begin{equation*}
    D^b(X) = \langle \MF([\A^n/\Z_d],g),E_1,\ldots,E_{n-d}\rangle
\end{equation*}
for certain exceptional objects $E_i$, and a reversed decomposition when $d\ge
n$ \cite{BFK2}.

Note that $[\A^n/\Z_d]$ is oddly-graded here, because the $\C^*$-action
$\lambda\cdot x=\lambda^{1/d}x$ is such that $-1\in\C^*$ acts non-trivially (it
is not cancelled by an element of $\Z_d$). This is in contrast with the
original hypersurface version of Orlov's theorem, which involves the even
grading $|x|=\frac{2}{d}$ on $([\A^n/\Z_d],f)$.

\begin{remark} \label[remark]{rmk:orltwo}
    Applying \cref{sec:even}, this oddly-graded affine orbifold LG-model is
    equivalent to an evenly-graded one, which is an instance of a weighted
    version of the original hypersurface form of Orlov's theorem. Specifically,
    we have
    \begin{equation*}
        \MF([\A^n/\Z_d],g) \simeq \MF([\A^{n+1}/\Z_{2d}],y^2+g)
    \end{equation*}
    where the $\Z_{2d}$-weights are $(1,\ldots,1,d)$. Variation of GIT relates
    this (via semi-orthogonal decomposition) to the weighted projective
    hypersurface
    \begin{equation*}
        \{y^2 + g = 0\} \subset \P^{[1:\cdots:1:d]},
    \end{equation*}
    which is simply another construction of the double cover $X$ branched over
    $g$. Hence the semi-orthogonal decomposition we talk about in this section
    can be understood using standard results for evenly-graded LG-models one
    dimension up.
\end{remark}

\begin{example}
    By writing a quadric as a double cover, we see that the Kuznetsov component
    $\calA_Q$ for a $d$-dimensional quadric $Q$ should have an equivalence
    \begin{equation*}
        \calA_Q \simeq \MF(\A^d,x_0^2+\cdots+x_d^2),
    \end{equation*}
    where $|x_i|=1$. By even Kn\"orrer periodicity this reduces to two cases:
    $\calA_Q\simeq D^b(\pt)$ if $d$ is odd, and
    $\calA_Q\simeq\MF(\A^1,x^2)\simeq D^b(\pt)^{\oplus2}$ if $d$ is even.
    Compare \cref{eqn:fiber}, which is the evenly-graded analogue of this
    computation using $\Z_2$ orbifold models.
\end{example}

\subsection{Diagonal quadric families} \label{sec:diag}

Matrix factorizations of quadric families over a $\Z_2$-gerbe are a
well-established object of study with links to mirror symmetry, non-commutative
resolutions and homological projective duality \cite{ASS}, \cite{K1}.

The fiberwise behaviour is understood from two examples:
\begin{align} \label{eqn:fiber}
\begin{split}
    \MF([\A^{2n}/\Z_2],x_1^2+\cdots+x_{2n}^2)
        \simeq \MF([\A^2/\Z_2],x^2+y^2) &\simeq D^b(\pt)^{\oplus2}, \\
    \MF([\A^{2n+1}/\Z_2],x_1^2+\cdots+x_{2n+1}^2)
        \simeq \MF([\A^1/\Z_2],x^2) &\simeq D^b(\pt).
\end{split}
\end{align}
Generically even-rank families behave like 2-to-1 covers with branching over
the corank 1 locus, while odd rank families behave like 1-to-1 covers with
doubling over the corank 1 locus. In fact, up to a Brauer twist, they are
non-commutative resolutions of the double cover branched over the discriminant
divisor (even rank) or the associated root stack (odd rank).

\begin{example}
    Typical examples relating to complete intersections of quadrics are
    families over even-weighted projective space, but the essential behaviour
    can be understood in affine charts. One possible local model is
    \begin{equation*}
        ([\A^2_{x,y}/\Z_2]\times\A^3_{a,b,c},ax^2+bxy+cy^2),
    \end{equation*}
    giving a non-commutative resolution of the 3-fold ordinary double point
    viewed as a double cover branched over $\{b^2=4ac\}\subset\A^3$. The rank
    drops by 2 at the singularity, and one finds families of matrix
    factorizations parametrized by the exceptional divisors of the two crepant
    resolutions \cite[\S5]{ASS}.
\end{example}

Our result gives a description of these non-commutative resolutions for
diagonal quadric families in terms of iterated double covers, modulo some
complications coming from technicality \cref{sec:tech}(\ref{itm:tech2}), which
require us to enlarge the group acting.

We consider the LG-model
\begin{equation*}
    \bigl([\A^n/G]\times\A^n, \; \textstyle\sum_ia_ix_i^2\bigr),
\end{equation*}
where $a_i$ and $x_i$ are coordinates on $\A^n$ and $[\A^n/G]$ respectively,
$G=\Z_2^n$ if $n$ is odd, and $G=\ker(\Z_2^n\to\Z_2)$ if $n$ is even. If $G$
was just $\Z_2$, this would be the expected local model. It is related to the
iterated double cover
\begin{equation*}
    Y\coloneqq\cap_i\{y_i^2=a_i\}\subset\A^{2n},
\end{equation*}
which is acted on by $G$ via the $y_i$ coordinates. The quotient stack $[Y/G]$
is a crepant orbifold resolution of the singular double cover
$\{y^2=\prod_ia_i\}$ if $n$ is even, and of the associated root stack if $n$ is
odd.

\begin{proposition} \label[proposition]{prop:iter}
    There is an equivalence
    \begin{equation*}
        \MF\bigl([\A^n/G]\times\A^n, \; \textstyle\sum_ia_ix_i^2\bigr)
            \simeq D^b([Y/G]).
    \end{equation*}
\end{proposition}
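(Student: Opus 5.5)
The plan is to reduce to the case $G=\Z_2^n$, where the model splits as an external product of $n$ rank-one families, and then obtain the case $n$ even by an index-two descent.

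\textbf{Grading.} Give each $x_i$ $R$-charge $1$ and each $a_i$ $R$-charge $0$. The element $-1\in\C^*_R$ then acts on $\A^n_x$ by the diagonal element $(1,\ldots,1)\in\Z_2^n$. This element lies in $G$ in both cases (for $n$ even because $n\equiv0$ mod $2$). So $-1$ acts trivially on $[\A^n/G]\times\A^n$, the LG-model is evenly-graded, and the technicality \cref{sec:tech}(\ref{itm:tech2}) does not arise.

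\textbf{The case $G=\Z_2^n$.} The LG-model is the fiber product over $\A^n_a$ of the $n$ models
\begin{equation*}
    \bigl([\A^1_{x_i}/\Z_2]\times\A^1_{a_i},\,a_ix_i^2\bigr).
\end{equation*}
Each of these is exactly the setting of \cref{cor:root}, with $X=\A^1_{a_i}$, $L$ trivial and $f=a_i$. It gives
\begin{equation*}
    \MF\bigl([\A^1/\Z_2]\times\A^1,a_ix_i^2\bigr)\simeq D^b\bigl([\{y_i^2=a_i\}/\Z_2]\bigr),
\end{equation*}
and the double cover $\{y_i^2=a_i\}$ is smooth, as \cref{thm:main} requires. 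I would then invoke a Künneth/Thom--Sebastiani formula for evenly-graded matrix factorizations on products of smooth stacks (as in \cite{BFK}), with superpotentials $\sum_i a_ix_i^2$ and $0$. The product of the Fourier--Mukai kernels $\bigl(\O\ \text{with loop}\ y_iq_i\bigr)$ is then a kernel in the product model. Alternatively, and more robustly, one can rerun the unit/counit computation from the proof of \cref{thm:main} with the tensor product of the $n$ Koszul-type bimodules. Since $\prod_i\{y_i^2=a_i\}=Y$, this gives
\begin{equation*}
    \MF\bigl([\A^n/\Z_2^n]\times\A^n,\textstyle\sum a_ix_i^2\bigr)\simeq D^b([Y/\Z_2^n]).
\end{equation*}

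\textbf{Descent to $G=\ker(\Z_2^n\to\Z_2)$ for $n$ even.} Both $[\A^n/G]\times\A^n\to[\A^n/\Z_2^n]\times\A^n$ and $[Y/G]\to[Y/\Z_2^n]$ are unbranched double covers. Each is classified by the same $2$-torsion line bundle $\chi=\chi_1\cdots\chi_n$, the character of $\Z_2^n$ that is trivial on $G$. The category for a $G$-quotient is recovered from the category for the $\Z_2^n$-quotient as the $\Z_2$-equivariant category, in the sense of \cite{E}, for the involution $\chi\otimes-$. So it suffices to show that the equivalence from the previous step intertwines $\chi\otimes-$ on the two sides. I would check this one factor at a time, in the style of \cref{prop:inv}. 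The kernel $\O\ \text{with loop}\ y_iq_i$ must be given a $\Z_2$-equivariant structure in which $x_i=q_i$ and $y_i$ both carry the sign character, so that $y_iq_i$ is invariant. Twisting that structure by $\chi_i\boxtimes\chi_i$ should return an isomorphic kernel, and this yields $\Phi(\chi_i\otimes-)\cong\chi_i\otimes\Phi(-)$. Tensoring over $i$ then handles $\chi$. One also has to check that this isomorphism of functors satisfies the cocycle condition, i.e.\ that it is an honest $\Z_2$-linearization of the involution, so that \cite{E} applies.

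\textbf{Main obstacle.} I expect the main difficulty to be the Künneth step. It requires the external tensor product of kernels to remain an equivalence over the common base $\A^n_a$, in the presence of the stacky group actions. The cleanest route is probably to bypass an abstract Thom--Sebastiani theorem and directly repeat the algebraic argument of \cref{thm:main}. Over $\A^n_a$ one takes $A=\bigotimes_i\C[a_i,q_i]$, with the $q_i$ odd. These commute with each other, since we work over $[\cdot/G]$, where $\pi_*\O$ sits over an evenly-graded base by \cref{lem:rel}. One takes $B=\bigotimes_i\C[a_i,y_i]/(y_i^2-a_i)$. The unit and counit are then tensor products of the rank-one ones, hence still quasi-isomorphisms. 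This is the step where any hidden Koszul signs from \cref{sec:tech}(\ref{itm:tech1}) would have to be tracked, by making all the relevant sign characters equivariant.
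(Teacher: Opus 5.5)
\textbf{Gap: combining the rank-one equivalences.} You propose two ways to get from the rank-one factors to the product, and neither goes through as written.

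First, the naive product kernel is not a matrix factorization. On $Y\times\A^n_q$,
\[
    \Bigl(\sum_i y_iq_i\Bigr)^2=\sum_i a_iq_i^2+2\sum_{i<j}y_iy_jq_iq_j .
\]

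Second, the direct rerun of the unit/counit computation fails for a related reason. To kill the cross terms you must form $M_1\otimes\cdots\otimes M_n$ over the common base $B\Z_2^n\times\A^n_a$ with Koszul signs. The result is then a bimodule over \emph{super} tensor products, and one side is non-commutative:
\begin{enumerate}
    \item with signs taken by $R$-charge, the $q_i$ anticommute;
    \item if you twist the parity by the diagonal element $\delta\in\Z_2^n$ so that the $q_i$ become even, the $y_i$ become odd and anticommute instead.
\end{enumerate}
Either way you land on one of the non-commutative models of \cref{sec:tech}(\ref{itm:tech2}) and \cref{ex:kp}. You never get $[\A^n/\Z_2^n]\times\A^n$ and $[Y/\Z_2^n]$ simultaneously.

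Your justification that the $q_i$ commute ``because $\pi_*\O$ sits over an evenly-graded base'' is exactly what breaks. The two sides are evenly graded through \emph{different} $\O$-linear sign operators: $(-1)\cdot\delta$ on the $q$-side, and $-1\in\C^*_R$ alone on the $y$-side. The rank-one kernels themselves live on oddly-graded roofs. Relatedly, checking that the total model is evenly graded is not the relevant hypothesis. \cref{sec:tech}(\ref{itm:tech2}) concerns the \emph{base} of each application of \cref{thm:main}, and that condition resurfaces the moment you try to combine factors.

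\textbf{How the paper avoids this.} The paper simply iterates \cref{thm:main} one coordinate at a time. At step $k$ the base is the intermediate stack $[(\A^{n-k-1}\times\A^k)/G]\times\A^n$. It contains the remaining $x_i$ and the already-created $y_i$ as functions. It is evenly graded because some element of $G$ acts by $-1$ on the remaining $x_i$ and by $+1$ on the $y_i$. The vacated slot is unconstrained, which is what lets such an element exist inside $\ker(\Z_2^n\to\Z_2)$ when $n$ is even. This treats both choices of $G$ uniformly, so no K\"unneth theorem, descent, or cocycle verification is needed.

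\textbf{How to keep your architecture.} You can salvage it only by applying a Thom--Sebastiani theorem \emph{separately} on each side and tensoring the functors $\Phi_i$ abstractly, never forming a product kernel. On each side $\boxtimes$ is $\O$-linear, using that side's own sign operator. You would then owe:
\begin{enumerate}
    \item a precise reference covering $\C^*_R$-graded stacky models with non-proper critical locus;
    \item the check that the $\Phi_i$ intertwine $\chi_i\otimes-$ as honest $\Z_2$-linearizations.
\end{enumerate}
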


\begin{proof}
    This is nothing more than an iteration of \cref{thm:main}. We must check
    that the base is evenly-graded at each intermediate step, because of
    \cref{sec:tech}(\ref{itm:tech2}). This involves for each $0\le k<n$ the
    following spaces:
    \begin{equation*}
        \begin{tikzcd}[column sep=huge]
            \cap_{i\le k}\{y_i^2=a_i\} \subset
            \bigl(\bigl[(\A^{n-k}\times\A^k)/G\bigr]\times\A^n, \;
            \textstyle\sum_{i\le n-k}a_ix_i^2\bigr)
                \ar[d] \\
            \cap_{i\le k}\{y_i^2=a_i\} \subset
            \bigl(\bigl[(\A^{n-k-1}\times\A^k)/G\bigr]\times\A^n, \;
                \textstyle\sum_{i\le n-k-1}a_ix_i^2\bigr) \\
            \cap_{i\le k+1}\{y_i^2=a_i\} \subset
            \bigl(\bigl[(\A^{n-k-1}\times\A^{k+1})/G\bigr]\times\A^n, \;
                \textstyle\sum_{i\le n-k-1}a_ix_i^2\bigr). \ar[u]
        \end{tikzcd}
    \end{equation*}
    The $R$-charge weights are $|x_i|=1$ and $|y_i|=|a_i|=0$, and it suffices
    to show that there is a $\Z_2\subset G$ which acts with these weights on
    the middle space. When $G=\Z_2^n$ this is clear, and the extra constraint
    $\Z_2\subset\ker(\Z_2^n\to\Z_2)$ for even $n$ is satisfiable since we are
    acting on $\A^{n-k-1}\times\A^k=\A^{n-1}$ so one of the weights in $\Z_2^n$
    is arbitrary.
\end{proof}

\begin{remark}
    One can globalize this to the setting where the $a_i$ are sections of the
    squares of certain line bundles, and $G$ acts fiberwise on the total space
    of the sum of these line bundles. This corresponds to a trivial
    $\Z_2$-gerbe structure on the base, so some twisting is required to recover
    examples over even-weighted projective space.
\end{remark}

\begin{remark}
    The diagonal $\Z_2\subset\Z_2^n$ is contained in $G$, with equality for
    $n\in\{1,2\}$.
\end{remark}

\begin{conjecture}
    In the above setup, we have an equivalence
    \begin{equation*}
        \MF\bigl([\A^n/G]\times\A^n, \; \textstyle\sum_ia_ix_i^2\bigr)
            \simeq
            \MF\bigl([\A^n/\Z_2]\times\A^n, \; \textstyle\sum_ia_ix_i^2\bigr).
    \end{equation*}
\end{conjecture}

This is expected since \cref{prop:iter} shows that both are crepant resolutions
of the same singularity, even though geometrically the equivalence is
unexpected. The morphism induced by $\Z_2\subset G$ is \emph{not} an
equivalence. Fiberwise we have
\begin{equation*}
    \MF\bigl([\A^n/G],\textstyle\sum_ix_i^2\bigr)
        \simeq D^b\bigl([\cap_i\{y_i^2=1\}/G]\bigr) = \begin{dcases*}
            D^b(\pt\amalg\pt) & $n$ even \\
            D^b(\pt) & $n$ odd,
        \end{dcases*}
\end{equation*}
but unlike \cref{eqn:fiber} the generators are not given by maximal isotropic
subspaces.

\begin{example}
    As a particular example of this fiberwise equivalence, we have
    \begin{equation*}
        \MF([\A^2/\Z_2^2],x^2+y^2) \simeq D^b([\{\pm1\}^2/\Z_2^2]) = D^b(\pt).
    \end{equation*}
    We could view this as a family of even rank quadratic forms over $B\Z_2^2$,
    which even Kn\"orrer periodicity suggests should be equivalent to
    $D^b(B\Z_2^2)\simeq D^b(\pt)^{\oplus4}$. This fails because there is a
    non-trivial Brauer twist, due to the lack of a $\Z_2^2$-invariant maximal
    isotropic subspace. In other words, this $\Z_2^2$-invariant quadratic form
    corresponds to a non-zero Brauer class $\alpha\in H^2(\Z_2^2,\C^*)=\Z_2$
    with
    \begin{equation*}
        \MF([\A^2/\Z_2^2],x^2+y^2)
            \simeq D^b(B\Z_2^2,\alpha) \simeq D^b(\pt).
    \end{equation*}
    It turns out that $\Coh(B\Z_2^2,\alpha)\simeq\Coh(\pt)$.
\end{example}

\begin{example}
    In the case $n=2$ we have $G=\Z_2$, and the model
    \begin{equation*}
        \bigl([\A^2_{x,y}/\Z_2]\times\A^2_{a,b},ax^2+by^2\bigr)
    \end{equation*}
    should be a non-commutative resolution for the double cover
    $\{c^2=ab\}\subset\A^3$. Applying \cref{prop:iter}, we get an equivalence
    \begin{equation*}
        \MF\bigl([\A^2_{x,y}/\Z_2]\times\A^2_{a,b},ax^2+by^2\bigr)
            \simeq D^b\bigl(\bigl[\A^2_{\sqrt a,\sqrt b}/\Z_2\bigr]\bigr).
    \end{equation*}
    This latter space is the (crepant) orbifold resolution of the quotient
    singularity.
\end{example}

\begin{remark}
    Hodge number duality for a recently-proposed mirror family of singular
    double covers was shown in \cite{HL}, with a proof based on establishing a
    Hodge-theoretical link between $(L^{-1},fq^2)$ and $\tilX$, and making use
    of the same iterated double covers mentioned above. The mirror symmetry
    proposals here make use of a LG-model with a superpotential of the form
    $\sum_if_iq_i^2$ \cite[\S3.2]{LLRS}, and it seems likely that our results
    could have implications for homological mirror symmetry in this context
    (e.g. \cite[Conjecture 4.4]{LLR}).
\end{remark}

\subsection{Exoflops} \label{sec:exoflop}

The term exoflop was coined in \cite{A} in reference to a phenomenon where
applying GLSM techniques (Kn\"orrer periodicity and GIT) to a resolution of
singularities leads to a partial compactification of LG-models.

For an example, consider the nodal surface $X=\{xy=z^2\}\subset\A^3$ and its
minimal resolution $\tilX=\{xy=z^2\}\subset\Bl_\pt\A^3=\O(-1)_{\P^2}$.
Kn\"orrer periodicity gives us equivalent LG-models:
\begin{equation*}
    \begin{tikzcd}
        X \ar[r,"\simeq"] & (\A^4,(xy-z^2)p) \\[-0.8em]
        \tilX \ar[u] \ar[r,"\simeq"] &
        \bigl([\O(-1)\oplus\O(-2)]_{\P^2},(xy-z^2)p\bigr). \ar[u,dashed]
    \end{tikzcd}
\end{equation*}
Now $[\O(-1)\oplus\O(-2)]_{\P^2}$ is one of the phases of a GIT problem
$\C^5/\C^*$ with weights $(1,1,1,-1,-2)$ and $R$-charges $(0,0,0,0,2)$. The
other phase is an orbifold $\O(-1)^3_{\P^{[1:2]}}$, and this is a Calabi--Yau
problem so we get an equivalence
\begin{equation*}
    D^b(\tilX) \simeq \MF\bigl([\O(-1)\oplus\O(-2)]_{\P^2},(xy-z^2)p\bigr)
        \simeq \MF\bigl(\O(-1)^3_{\P^{[1:2]}},(xy-z^2)p\bigr).
\end{equation*}
The key observation is that $(\O(-1)^3_{\P^{[1:2]}},(xy-z^2)p)$ partially
compactifies the original model $(\A^3,(xy-z^2)p)$, which is an affine chart
missing the orbifold point. Resolution becomes (partial) compactification: the
exceptional divisor has ``flopped out''.

On the other hand, we can view $X$ as a double cover of $\A^2$ branched over
$\{xy=0\}$, and $\tilX$ as a double cover of $\Bl_\pt\A^2$ branched over its
strict transform. We then get equivalences with oddly-graded LG-models from
\cref{thm:main}:
\begin{equation*}
    \begin{tikzcd}
        X \ar[r,"\simeq"] & (\A^3,xyq^2) \\[-0.8em]
        \tilX \ar[u] \ar[r,"\simeq"] &
        \bigl(\O(-1)^2_{\P^1},xyq^2\bigr). \ar[u,dashed]
    \end{tikzcd}
\end{equation*}
Again the resulting model $\O(-1)^2_{\P^1}$ has a flop, which can be realized
via GIT. Here there are no orbifolds; it is simply the Atiyah flop. We get an
equivalence
\begin{equation*}
    D^b(\tilX) \simeq \MF(\O(-1)^2_{\P^1_{x:y}},xyq^2)
        \simeq \MF(\O(-1)^2_{\P^1_{s:q}},xyq^2),
\end{equation*}
and again this final space is a partial compactification of the original model
$(\A^3,xyq^2)$. The two compactifications are related by another application of
\cref{thm:main}; we illustrate these equivalences in \cref{fig:exoflop}.

\begin{figure}[ht]
    \centering
    \begin{equation*}
        \begin{tikzcd}[column sep=tiny]
            (\A^4,(xy-z^2)p) \ar[r,dashed,bend left,"\text{compactify}"] &
            (\O(-1)^3_{\P^{[1:2]}},(xy-z^2)p) &
            ([\O(-1)\oplus\O(-2)]_{\P^2},(xy-z^2)p)
                \ar[l,<->,bend right,"\text{flop}"',"\simeq"] \\
            X \ar[u,"\text{even Kn\"orrer}"',"\simeq"]
                \ar[d,"\text{\cref{thm:main}, }z^2=xy","\simeq"'] & &
            \tilX \ar[u,"\text{even Kn\"orrer}"',"\simeq"]
                \ar[d,"\text{\cref{thm:main}, }z^2=xy","\simeq"'] \\
            (\A^3,xyq^2) \ar[r,dashed,bend right] &
            (\O(-1)^2_{\P^1_{s:q}},xyq^2)
                \ar[uu,"\text{\cref{thm:main}, }q^2=p"',"\simeq"] &
            (\O(-1)^2_{\P^1_{x:y}},xyq^2). \ar[l,<->,bend left,"\simeq"']
        \end{tikzcd}
    \end{equation*}
    \caption{Nodal exoflops.} \label{fig:exoflop}
\end{figure}

\subsection{Categorical resolutions} \label{sec:res}

Exoflops show that resolutions can give partial compactifications of LG-models.
We can actually reverse this arrow, constructing categorical resolutions of
complete intersection singularities by compactifying their associated LG-models
\cite{FK}. This builds categorical resolutions \emph{ex nihilo}, and the
results are a priori unrelated to any geometric resolution. In practice,
however, we find categories that are equivalent to constructions from \cite{K2}
and \cite{KL} which relate to $D^b(\tilX)$ via semi-orthogonal decomposition.
This is as would be predicted by optimistic conjectures on ``minimal''
categorical resolutions.

More specifically, suppose $X=\{f=0\}\subset S$ is a singular hypersurface in a
smooth ambient space $S$. From Kn\"orrer periodicity, we get an LG-model
$(\O_S(-X),fp)$ with critical locus $\Crit(fp)=X\cup\O_S(-X)|_{\Sing(X)}$. Now,
it is known that $\MF(U,w)$ is homologically smooth and proper iff $\Crit(w)$
is proper \cite{FK}, and when $X$ is proper we should enforce that a
categorical resolution is homologically smooth and proper. Hence, we should
partially compactify the LG-model $(\O_S(-X),fp)$ so that the $\A^1$ fibers of
$\O_S(-X)|_{\Sing(X)}\subset\Crit(fp)$ are compactified to projective lines,
possibly with orbifold structure. Given such a partial compactification,
pushforward and pullback along the open embedding of the original model form
the categorical structure of a resolution \cite{FK}. When $X$ is not proper,
the same process accomplishes a relative properness over $X$ which again
procludes pathological categorical resolutions, allowing us to work with
non-proper singularities \cite[\S2]{C}. This fits the philosophy that
resolution of singularities should be a local process.

\begin{example} \label[example]{ex:dual}
    Consider $\C[x]/x^n$, which is derived equivalent to $(\A^2,x^np)$ by
    Kn\"orrer periodicity. A suitable compactification is given by the orbifold
    model
    \begin{equation*}
        (\O(-1)_{\P^{[1:n]}_{s:p}},x^np),
    \end{equation*}
    which is a GIT quotient $\C^3/\C^*$ with weights $(-1,1,n)$ and $R$-charges
    $(0,0,2)$. Here the affine fiber over the singularity has been compactified
    to an orbifold projective line $\P^{[1:n]}$. The other phase of this GIT
    problem is an affine orbifold $([\A^2/\Z_n],p)$, which gives a trivial
    matrix factorization category since $\Crit(p)$ is empty. This means that
    $\MF(\O(-1)_{\P^{[1:n]}},x^np)$ has a full exceptional collection
    \cite{BFK2}, and in fact it is derived equivalent to the Auslander algebra
    for $\C[x]/x^n$ which has a full exceptional collection from
    \cite[\S5.4]{KL}.
\end{example}

The quadratic case $n=2$ in \cref{ex:dual} can be understood using
\cref{thm:main}. Indeed, the categorical resolution
\begin{equation*}
    \MF(\O(-1)_{\P^{[1:2]}},x^2p)
\end{equation*}
is equivalent to the double cover of $\P^{[1:2]}$ branched over $\{p=0\}$. This
double cover is simply $\P^1$ with induced $R$-charge weights $(0,1)$, with the
Beilinson full exceptional collection. We can view this oddly-graded $\P^1$ as
compactifying the oddly-graded $\A^1$ which comes out of Koszul duality for the
dual numbers: $D^b(\C[x]/x^2)\simeq D^b(\C[\theta])$, where $|\theta|=1$. This
instance of Koszul duality can also be deduced from \cref{thm:main}:
\begin{equation*}
    D^b(\C[x]/x^2) \simeq \MF(\A^2,x^2p) \simeq D^b(\C[\sqrt p\,]),
        \qquad |\sqrt p|=1.
\end{equation*}

Orbifold points in the partial compactification usually indicate non-minimality
of the resolution via the analogy of root stacks with blowups (e.g.
\cite[\S4.2]{C}, \cref{sec:A_3}), but this example shows that appearances can
be misleading: the orbifold point is cancelled by a quadratic term. This also
happened in \cref{sec:exoflop}.

\subsubsection{Type $A_2$} \label{sec:A_2}

Consider the $A_2$ curve singularity $\{y^2=x^3\}\subset\A^2$. We will use
\cref{thm:main} to compute the kernel subcategory of a categorical resolution
of this singularity constructed using compactifications of LG-models. The
result of the computation matches the analysis of \cite{F}, where categorical
resolutions of higher-dimensional singularities of type $A_2$ were constructed
using the methods of \cite{K2}.

By Kn\"orrer periodicity, we can start with the LG-model $(\A^3,(y^2-x^3)p)$.
The categorical resolution we will consider is given by the partial
compactification
\begin{equation*}
    (\O(-1)^2_{\P^{[1:2]}},(y^2-x^3s)p),
\end{equation*}
where $x,y$ are fiber coordinates on the line bundles, and $s,p$ are
homogeneous coordinates on $\P^{[1:2]}$. The kernel subcategory is the
subcategory of matrix factorizations supported on the boundary divisor of the
compactification, which is $\{s=0\}$. To compute this we can restrict to the
open neighbourhood $\{p\ne0\}$, which is isomorphic to the affine orbifold
LG-model
\begin{equation*}
    ([\A^3/\Z_2],y^2-x^3s)
\end{equation*}
with $\Z_2$-weights $(1,1,1)$ and $R$-charges $(1,1,-1)$. Applying
\cref{thm:main}, we have
\begin{equation*}
    \MF([\A^3/\Z_2],y^2-x^3s) \simeq \MF(\A^2,-x^3s),
\end{equation*}
with $\A^2$ viewed as a stacky double cover of $[\A^2/\Z_2]$. The kernel
subcategory again corresponds to matrix factorizations supported at $\{s=0\}$.
Finally, even Kn\"orrer periodicity gives
\begin{equation*}
    \MF(\A^2,-x^3s) \simeq D^b(\C[x]/x^3),
\end{equation*}
where again the $R$-charge is $|x|=1$. The subcategory supported at $\{s=0\}$
corresponds to $\Perf(\C[x]/x^3)$, which shows that the kernel subcategory of
this resolution is generated by an object with graded endomorphism algebra
$\C[x]/x^3$, where $|x|=1$. This precisely matches \cite[Proposition 4.6]{F}.

\subsubsection{Type $A_3$} \label{sec:A_3}

Consider the $A_3$ curve singularity $\{y^2=x^4\}\subset\A^4$. We will
construct two categorical resolutions of this singularity using
compactifications of LG-models, and show that the two are related via a
semi-orthogonal decomposition by combining \cref{thm:main} with variation of
GIT. This is motivated by the imprecise conjecture that any ``nice enough''
categorical resolution should contain a unique minimal categorical resolution
as an admissible component.

Firstly, we construct a partial compactification with no orbifold structure
using the coordinate change $y^2-x^4 = \gamma(\gamma-\chi^2)$. Here
$\gamma=y+x^2$ and $\chi=\sqrt2x$. Using Kn\"orrer periodicity, we start with
the LG-model
\begin{equation*}
    (\A^3,(y^2-x^4)p) = (\A^3,\gamma(\gamma-\chi^2)p).
\end{equation*}
The categorical resolution is given by the partial compactification
\begin{equation*}
    ([\O\oplus\O(-1)]_{\P^1},\gamma(\gamma\sigma-\chi^2)p),
\end{equation*}
which is a phase of the GIT problem $\C^4/\C^*$ with coordinates
$(\chi,\gamma,\sigma,p)$ and weights $(0,-1,1,1)$. Variation of GIT shows that
this categorical resolution has an SOD of the form $\langle
D^b(\A^1),D^b(\A^1)\rangle$, which is related to the construction of the
singularity as a gluing of two copies of $\A^1$.\footnote{One can construct
categorical resolutions of singularities which are formed by gluing smooth
components by taking suitable semi-orthogonal gluings of the components
\cite{L}.} Unlike \cref{sec:A_2}, we do not get a semi-orthogonal decomposition
relating it directly to the normalization of the singularity. Due to the lack
of orbifold structure, we expect this to be the minimal categorical resolution.

In contrast, there is a non-minimal categorical resolution given by the
following orbifold partial compactification, without requiring a coordinate
change:
\begin{equation*}
    ([\O(-1)\oplus\O(-2)]_{\P^{[1:4]}},(y^2-x^4)p).
\end{equation*}
This is a phase of the GIT problem $\C^4/\C^*$ with coordinates $(x,y,s,p)$ and
weights $(-1,-2,1,4)$, and the other phase is equivalent to the normalization
of the curve by Kn\"orrer periodicity.\footnote{Specifically, this realizes the
normalization as a one-step $(1,2)$-weighted blowup.} Hence, this categorical
resolution can be constructed by adjoining two semi-orthogonal exceptional
objects to the normalization of the curve. These objects are the torsion sheaf
$\O/(x,y)$ and its line bundle twist $\O(1)/(x,y)$.

Call these two categorical resolutions $\scrC_{\min{}}$ and $\scrC_\norm$:
\begin{align*}
    \scrC_{\min{}}
        &\coloneqq \MF([\O\oplus\O(-1)]_{\P^1},\gamma(\gamma\sigma-\chi^2)p), \\
    \scrC_\norm
        &\coloneqq \MF([\O(-1)\oplus\O(-2)]_{\P^{[1:4]}},(y^2-x^4)p).
\end{align*}
We claim that there is a semi-orthogonal decomposition
\begin{equation} \label{eqn:sod}
    \scrC_\norm = \langle\scrC_{\min{}}, E_1, E_2\rangle,
\end{equation}
where $E_1$ and $E_2$ are exceptional objects contained in the kernel
subcategory of $\scrC_\norm$, given by the torsion sheaf $\O/(s,y-x^2)$ and its
line bundle twist $\O(1)/(s,y-x^2)$.

That $E_1$ and $E_2$ are contained in the kernel subcategory is clear, since
they are supported at $s=0$, and that they are exceptional (and
semi-orthogonal) follows from direct computation. That their orthogonal
complement is equivalent to $\scrC_{\min{}}$, however, is non-obvious. The two
LG-models have affine open covers where one open set is the original LG-model
$(\A^3,(y^2-x^4)p)$, while the other open set $\{p\ne0\}$ is different in each
model. We will justify \cref{eqn:sod} by showing that these different open sets
containing the compactification divisors are related by an analogous SOD, using
the fact that $E_1$ and $E_2$ are supported at the boundary. The global
decomposition in \cref{eqn:sod} follows by gluing this together with the
identity on $(\A^3,(y^2-x^4)p)$.

So, we are interested in the following LG-models (the two charts $\{p\ne0\}$):
\begin{equation*}
    \scrC^\infty_{\min{}}
        \coloneqq \MF(\A^3,\gamma(\gamma\sigma-\chi^2))
    \qquad \text{and} \qquad
    \scrC^\infty_\norm
        \coloneqq \MF([\A^3/\Z_4],y^2-x^4).
\end{equation*}
For $\scrC^\infty_{\min{}}$ the $R$-charges are
$(|\chi|,|\gamma|,|\sigma|)=(0,2,-2)$. For $\scrC^\infty_\norm$ the
$\Z_4$-weights are $(-1,-2,1)$ and the $R$-charges are
$(|x|,|y|,|s|)=(\frac{1}{2},1,-\frac{1}{2})$.

Applying \cref{thm:main} to the $\gamma\chi^2$-term in
$\gamma(\gamma\sigma-\chi^2)$ gives
\begin{equation*}
    \scrC^\infty_{\min{}} \simeq \MF(\A^2,\zeta^4\sigma)
        \simeq D^b(\C[\zeta]/\zeta^4),
\end{equation*}
where $\zeta=\sqrt\gamma$ has $R$-charge $|\zeta|=1$, from a double cover
branched over $\{\gamma=0\}$. The second equivalence is Kn\"orrer periodicity
for the hypersurface $\{\zeta^4=0\}\subset\A^1$, and the kernel subcategory
supported at $\{\sigma=0\}$ is equivalent to $\Perf(\C[\zeta]/\zeta^4)$.

We will study $\scrC^\infty_\norm$ using the same trick, so we first switch to
the same coordinates:
\begin{equation*}
    \scrC^\infty_\norm = \MF([\A^3/\Z_4],\gamma(\gamma-\chi^2)),
\end{equation*}
where $\chi$ and $\gamma$ are homogeneous with the same weights as $x$ and $y$.
Applying \cref{thm:main} to the $\gamma\chi^2$-term gives
\begin{equation*}
    \scrC^\infty_\norm \simeq \MF([\A^2/\Z_4],\zeta^4),
\end{equation*}
with $\Z_4$-weights $(1,1)$ and $R$-charges
$(|\zeta|,|s|)=(\frac{1}{2},-\frac{1}{2})$. This is a phase of the GIT problem
$\C^3/\C^*$ with weights $(1,1,-4)$ and $R$-charges
$(|\zeta|,|s|,|p|)=(1,0,-2)$. The other phase is
\begin{equation*}
    \MF(\O(-4)_{\P^1},\zeta^4p) \simeq D^b(\C[\zeta]/\zeta^4)
        \simeq \scrC^\infty_{\min{}},
\end{equation*}
so variation of GIT gives us our desired semi-orthogonal decomposition
\begin{equation*}
    \scrC^\infty_\norm = \langle\scrC^\infty_{\min{}},E_1,E_2\rangle,
\end{equation*}
where $E_1$ and $E_2$ come from torsion sheaves on the unstable locus
$\{\zeta=s=0\}$. Unravelling the equivalences involved leads to the explicit
description of $E_1$ and $E_2$ given initially.

In particular, the kernel subcategory of $\scrC_{\min{}}$ is generated by an
object with graded endomorphism algebra $\C[\zeta]/\zeta^4$, where $|\zeta|=1$.
The kernel subcategory of $\scrC_\norm$ has this as an admissible component,
but also contains $E_1$ and $E_2$.

\printbibliography

\end{document}